%% file: paper3.tex
\documentclass[reqno]{amsart}
\usepackage{amsaddr}
\usepackage[utf8]{inputenc}
\usepackage[T1]{fontenc}
\usepackage{amsmath, amsthm, amssymb, xcolor, graphicx}
\usepackage[colorlinks=true, allcolors=blue]{hyperref}

\newtheorem{theorem}{Theorem}
\newtheorem{lemma}{Lemma}
\newtheorem{corollary}{Corollary}

\newtheorem{definition}{Definition}
\newcommand{\norm}[1]{\left\lVert#1\right\rVert}

\title[DNWR for heterogeneous heat]{Dirichlet--Neumann waveform relaxation for heterogeneous heat equations: continuous and time discrete $L^2$ analysis}
\author[P. Birken, M.J. Gander, N. Kotarsky and L.D.~Lu]{Philipp Birken$^{{\lowercase\mathrm{a}}}$, Martin J. Gander$^{{\lowercase\mathrm{b}}}$, Niklas Kotarsky$^{{\lowercase\mathrm{a}}}$, Liu-Di Lu$^{{\lowercase\mathrm{a}}}$}

\address[Lund]{
	$\,^{\lowercase\mathrm{a}}$ Centre for Mathematical Sciences, 
	Lund University\\ 
	Märkesbacken 4, 22362 Lund,
	Sweden}

\address[Geneva]{
	$\,^{\lowercase\mathrm{b}}$ Section of Mathematics, 
	University of Geneva\\ 
	Rue de Conseil-General, 7-9 Geneva,
	Switzerland}
	
\keywords{Dirichlet--Neumann waveform relaxation, error estimate, heterogeneous heat}

\subjclass[2020]{Primary
			 65M55; 
}

\begin{document}
	\begin{abstract}
		We consider two coupled linear heat equations on different spatial domains that interact through a lower dimensional interface. This models conjugate heat transfer. The problem is solved using Dirichlet--Neumann waveform relaxation. This allows us to couple separate codes for the subproblems, a so-called partitioned approach. Our overall goal is to develop more efficient partitioned methods, and to this end, we want reliable error estimates. 
		
		We use an exponentially weighted Fourier technique to derive new error estimates in $L^2$ for finite time $T$ in both continuous and time discrete settings. We identify an optimized relaxation parameter that guarantees superlinear convergence. Our new continuous estimate predicts linear convergence when $T$ is large, and superlinear when $T$ is small. For large $T$, our new time discrete estimate closely mirrors its continuous counterpart, whereas for small $T$, superlinear convergence in the time discrete case requires small time step $\Delta t$. We also show that convergence is fast when the contrast is large, provided that the small physical parameter domain (e.g. air) is using the Dirichlet transmission condition, and the large physical parameter domain (e.g. steel) is using the Neumann transmission condition in the Dirichlet-Neumann waveform relaxation method. Our numerical experiments confirm all these findings. 
\end{abstract}
	
	\maketitle
	
	\input{introduction.tex}
	\input{cont2norm.tex}
	\input{timeDisc.tex}	
	\input{compare.tex}
	\input{experiments.tex}

\section{Conclusion}
We derived an optimized relaxation parameter and best assignment of subdomains for fast convergence of the Dirichlet--Neumann waveform relaxation method applied to two coupled heat equations in both continuous and time discrete cases. 
More precisely, the Dirichlet subdomain must be assigned to the small physical parameter domain (air), and the Neumann condition must be assigned to the large physical parameter domain (steel) for fast convergence.
Our analysis is based on the exponentially weighted Fourier technique~\cite{Kwok2025}, introducing an auxiliary weight parameter $r$ to derive estimates for finite times $T$. 
For large $T$, convergence is linear, and for small $T$ superlinear, except when the time step $\Delta t$ is large and the algorithm converges slowlier than predicted by the continuous analysis, a phenomenon well captured by our time discrete analysis.
Even though our analysis is in 1D, our numerical experiments in 2D showed that the algorithm behavior remains very similar to the 1D case.

\bibliographystyle{siam}
\bibliography{ourbibliography}

\end{document}

%% file: introduction.tex
\section{Introduction}

Multiphysics problems involve interacting subsystems on different spatial domains, which are coupled through an interface. They are commonly modelled as coupled time-dependent partial differential equations (PDEs) that interact through a shared lower-dimensional interface~\cite{keyes2013}. Applications include fluid-structure interaction~\cite{lorentzon2020}, heat transfer in composite materials~\cite{kowollik2013}, and atmosphere-ocean coupling in climate models~\cite{schuller2025}. From a computational perspective, multiphysics problems are often addressed with partitioned approaches that reuse existing single physics solvers on each subdomain. These solvers communicate only through coupling conditions, enabling independent development, maintenance, and spatial discretization~\cite{rodenberg2025}. Nevertheless, designing stable and efficient coupling algorithms, especially for time-dependent problems, remains challenging due to complex interface interactions.

Waveform relaxation, originally introduced in~\cite{lelarasmee1982} for parallel simulation of large scale electronic circuits, provides a principled framework for coupling time-dependent PDEs. In waveform relaxation methods, the coupled system is solved iteratively over a time interval by alternatingly solving each subsystem and exchanging time-dependent interface data after each iteration. When combined with domain decomposition techniques, this approach leads to several algorithms, including Schwarz waveform relaxation (SWR), Dirichlet--Neumann waveform relaxation (DNWR), and Neumann-Neumann waveform relaxation (NNWR); see~\cite[Chapter 3]{waveformBook} for a general overview. Applications of waveform relaxation to multiphysics problems can be found in~\cite{clement2022, meimonbir:23, monbir:18, rodenberg2025}. 

Achieving fast convergence in waveform relaxation methods often requires an appropriate choice of relaxation parameters. Suitable parameter choices may lead to {\sl superlinear convergence} on finite time windows. This property is particularly advantageous in long time simulations. In such simulations, the global time horizon is typically decomposed into smaller time windows, and waveform relaxation iterations are performed within each time window until a stopping criterion is satisfied before marching to the next time window. Therefore, understanding how convergence depends on relaxation parameters, physical parameters, and the length of the time window, is a central topic in the analysis of waveform relaxation methods.

Several convergence analysis techniques for waveform relaxation methods have been developed in the literature~\cite{arnoult2023, clement2022, engstrom2024, gander2007, haynes2020, ledebl:13, monbir:18}. In particular, deriving accurate error estimates is crucial as they provide practical stopping criteria. We list here some references that contain linear and superlinear error estimates. In the continuous setting, the authors in~\cite{ledebl:13} derive linear $L^2$ error estimates for SWR applied to heterogeneous advection--diffusion equations on unbounded space-time domains, using Fourier analysis and the Plancherel theorem. In~\cite{gakwma:16}, the authors investigate DNWR and NNWR methods applied to homogeneous heat equations in finite time windows. Based on Laplace transform and kernel estimates, they derive superlinear $L^{\infty}$ error estimates using special relaxation parameters, $\theta=1/2$ for DNWR and $\theta=1/4$ for NNWR. In~\cite{DD28}, we extend the analysis of DNWR applied to heterogeneous heat equations, and also derive superlinear $L^{\infty}$ error estimates for a material-dependent relaxation parameter $\theta$. 

In the time discrete setting, the authors in~\cite{hencha:09} provide an analysis for the DNWR using one implicit Euler step applied to heterogeneous heat equations. They derive linear $L^2$ error estimates via spatial Fourier transform. An extension to multiple time steps and heterogeneous advection-diffusion equations is presented in~\cite{clement2022}, where the authors apply both implicit Euler and Runge-Kutta schemes, and derive linear $l^2$ error estimates using $z$-transform on unbounded space-time domains. 

Despite this body of work, to our knowledge, there are no continuous and time discrete estimates for arbitrary relaxation parameters that depend on the time length $T$. Furthermore, the precise relationship between continuous superlinear convergence and its discrete counterpart, as well as the influence of the time step size and time window length, is still lacking. 

The present paper closes these gaps. We consider two coupled linear heat equations with different material parameters. To analyze the convergence of DNWR applied to such problems on finite time windows, we introduce a framework based on the exponentially weighted Fourier transform proposed in~\cite{Kwok2025}. Using this framework, we derive $L^2$ estimates for finite time lengths $T$ in both continuous and time discrete cases for arbitrary relaxation parameters $\theta$. We also show that the continuous estimate converges superlinearly for a specific choice of relaxation parameter, and compare both continuous and time discrete error estimates for this relaxation parameter. 
Our time discrete analysis reveals that, to preserve superlinear convergence, the time step size $\Delta t$ must be reduced as $T$ decreases. These findings further show the importance of time discrete estimates, especially when the total simulation time is split into multiple smaller time windows.

The rest of the paper is organized as follows. In Section~\ref{sec:2}, we present a convergence analysis of DNWR applied to two coupled heat equations in the continuous setting and derive superlinear error estimates using the relaxation parameter given in~\cite{DD28}. In Section~\ref{sec:3}, we extend our analysis to the time discrete setting using the implicit Euler scheme and derive corresponding superlinear error estimates. Section~\ref{sec:4} compares the continuous and time discrete error estimates. Finally, in Section~\ref{sec:5}, we validate the theoretical results with numerical experiments and discuss the sharpness of our error estimates.

%% file: cont2norm.tex
\section{Continuous convergence analysis}\label{sec:2}
	
We consider as our model problem two coupled linear heat equations in one dimension. Let $Q_1 := (-a, 0) \times (0, T)$ and $Q_2 := (0, b) \times (0, T)$ denote two space-time domains with $a>0$, $b>0$, and the time $T>0$. In each domain $Q_j$, we denote by $u_j$ the temperature, $f_j$ the external force, $u_{0, j}$ the initial condition, $\alpha_j$ the volumetric heat capacity, and $\lambda_i$ the heat conductivity. For each domain, the associated heat equation reads
\begin{equation}\label{eq:CoupledSystem}
	\begin{aligned}
	\alpha_1 \partial_t u_1 -\lambda_1 \partial_{xx} u_1 &= f_1& &\text{in } Q_1,&
	\alpha_2 \partial_t u_2 -\lambda_2 \partial_{xx} u_2 &= f_2&  &\text{in } Q_2,\\
	u_1(-a, t) &= g_l(t)& &\text{in } (0,T),&	
	u_1(b, t) &= g_r(t)&  &\text{in } (0,T),\\
	u_1(x, 0) &= u_{0, 1}(x)& &\text{in } (-a,0),& 
	u_2(x, 0) &= u_{0, 2}(x)& &\text{in } (0,b),
\end{aligned}
\end{equation}
where $g_l$ and $g_r$ are two Dirichlet boundary conditions. These two heat equations are coupled at the interface using the two coupling conditions
\begin{equation}\label{eq:CoupleCondition}
u_1(0, t) = u_2(0, t) \ \text{ in } (0,T), \quad 
-\lambda_1 \partial_x u_1(0, t) = -\lambda_2 \partial_x u_2(0, t) \ \text{ in } (0,T).
\end{equation}
The first condition imposes continuity of temperature, whereas the second ensures continuity of the heat flux at the interface.

The goal of our study is to analyze convergence of the Dirichlet--Neumann waveform relaxation (DNWR) method applied to our model problem~\eqref{eq:CoupledSystem}-\eqref{eq:CoupleCondition}. For any given initial guess $h^0(t)$ and the iteration index $k = 1, 2, \ldots$, the DNWR iteration reads
\begin{equation}\label{eq:DNWR}
\begin{aligned}
	\alpha_1 \partial_t u_1^{k+1} - \lambda_1 \partial_{xx} u_1^{k+1} &= f_1 \ \text{ in } Q_1,&
	\alpha_2 \partial_t u_2^{k+1} - \lambda_2 \partial_{xx} u_2^{k+1} &= f_2 \ \text{ in } Q_2, \\
	u_1^{k+1}(-a, t) &= g_l(t), \ &
	-\lambda_1 \partial_x u_1^{k+1}(0, t) &= -\lambda_2 \partial_x u_2^{k+1}(0, t), \\
	u_1^{k+1}(0, t) &= h^{k}(t), &
	u_1^k(b, t) &= g_r(t), \\
	u_1^{k+1}(x, 0) &= u_{0, 1}(x), &
	u_2^{k+1}(x, 0) &= u_{0, 2}(x),
\end{aligned}
\tag{DNWR}
\end{equation}
and we update the Dirichlet transmission condition $h^k$ by
\[h^{k+1}(t) := (1-\theta) h^k(t) + \theta u^{k+1}_2(0, t),
\quad
\theta \in (0, 1),\]
where $\theta$ is the relaxation parameter. Note that here we choose to impose the Dirichlet condition at the interface in $Q_1$ and the Neumann condition at the interface in $Q_2$. 

To present our convergence analysis, we first introduce a framework based on the exponentially weighted Fourier transform proposed in~\cite{Kwok2025}.

\begin{definition}\label{def:FourierTransform}
Let $r>0$, $\omega\in\mathbb{R}$, and $\phi$ a given function. The exponentially weighted Fourier transform of the function $\phi$ is defined by
\[\hat{\phi}(r + i\omega) :=  \int_{-\infty}^{\infty} \phi(t) e^{-(r+i \omega) t} \, dt,\] 
if this integral is finite.
\end{definition}
Note that the exponentially weighted Fourier transform of a function $\phi(t)$ is the standard Fourier transform applied to the function $e^{-rt}\phi(t)$, with $r>0$ the weight parameter. We use the hat notation to denote such a transform, and write the argument as $r+i\omega$ to emphasize the dependence on both the weight parameter $r$ and the frequency $\omega$. From Definition~\ref{def:FourierTransform}, we can verify that the exponentially weighted Fourier transform inherits most of the properties of the Fourier transform, such as linearity, time shifting, and differentiation. Also, it exists if $\phi\in L^1$. 
The Fourier transform of an $L^2$ function is in general defined via extension using the Plancherel theorem, see e.g.~\cite{grafakos2014a}. For the exponentially weighted Fourier transform of an $L^2$ function, we need additional assumptions, since the weight $e^{-rt}$ blows up as $t\to-\infty$ for $r>0$. The next result establishes the existence of this transform under an assumption that is relevant to our study.

\begin{lemma}\label{prepCont2norm}
Let $r > 0$ and $\phi$ be a function defined in $(-\infty,\infty)$ such that $\norm{\phi}_{L^2(\mathbb{R^+})} < \infty$ and $\phi(t) = 0$ for $t < 0$. Then the exponentially weighted Fourier transform of $\phi$ exists, and we have
\[\frac{1}{2 \pi} \int_{-\infty}^{\infty} |\hat{\phi}(r+i\omega)|^2 d\omega \leq \int_{-\infty}^{\infty} |\phi(t)|^2 dt.\]
\end{lemma}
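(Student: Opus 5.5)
The plan is to reduce the statement to the classical Plancherel theorem applied to the weighted function $\psi(t):=e^{-rt}\phi(t)$. Since $\phi$ vanishes for $t<0$, we have $\psi(t)=0$ for $t<0$. For $t\ge 0$ we have $|\psi(t)|=e^{-rt}|\phi(t)|\le|\phi(t)|$ because $r>0$. Hence
\[\int_{-\infty}^{\infty}|\psi(t)|^2\,dt=\int_0^\infty e^{-2rt}|\phi(t)|^2\,dt\le\norm{\phi}_{L^2(\mathbb{R}^+)}^2<\infty,\]
so $\psi\in L^2(\mathbb{R})$. By Definition~\ref{def:FourierTransform}, $\hat\phi(r+i\omega)$ is formally the standard Fourier transform $\mathcal{F}\psi(\omega)=\int\psi(t)e^{-i\omega t}\,dt$. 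We therefore \emph{define} $\hat\phi(r+\cdot\, i)$ as $\mathcal{F}\psi$, in the $L^2$ sense given by the Plancherel extension~\cite{grafakos2014a}. This settles existence.

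To be careful about the meaning of the integral, I will also note that it converges pointwise in many cases. By Cauchy--Schwarz,
\[\int_0^\infty|\phi(t)|e^{-rt}\,dt\le\norm{\phi}_{L^2(\mathbb{R}^+)}\Bigl(\int_0^\infty e^{-2rt}\,dt\Bigr)^{1/2}=\norm{\phi}_{L^2(\mathbb{R}^+)}(2r)^{-1/2}.\]
So $\psi\in L^1(\mathbb{R})\cap L^2(\mathbb{R})$, and the defining integral in Definition~\ref{def:FourierTransform} is absolutely convergent for every $\omega$. For functions in $L^1\cap L^2$ the Plancherel extension agrees with the pointwise integral, so there is no ambiguity between the two notions of transform.

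Finally, Plancherel with the normalization $\mathcal{F}\psi(\omega)=\int\psi e^{-i\omega t}\,dt$ gives
\[\frac{1}{2\pi}\int_{-\infty}^{\infty}|\hat\phi(r+i\omega)|^2\,d\omega=\int_{-\infty}^{\infty}|\psi(t)|^2\,dt=\int_0^\infty e^{-2rt}|\phi(t)|^2\,dt\le\int_{-\infty}^{\infty}|\phi(t)|^2\,dt,\]
where the last step uses $e^{-2rt}\le1$ for $t\ge0$ and $\phi=0$ on $t<0$.

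There is no real obstacle in this argument. The only point needing care is the existence step: one must observe that the support condition $\phi(t)=0$ for $t<0$ is exactly what prevents the weight $e^{-rt}$ from blowing up as $t\to-\infty$. One must also check that the Cauchy--Schwarz bound makes the transform an honest absolutely convergent integral, so that Plancherel applies to the same object the definition describes.
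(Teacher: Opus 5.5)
Your proof is correct and matches the paper's argument: show $\psi=e^{-rt}\phi\in L^2(\mathbb{R})$ using the support condition and $r>0$, apply Plancherel to $\psi$, and then bound $e^{-2rt}\le 1$. Your extra Cauchy--Schwarz step showing $\psi\in L^1(\mathbb{R})$ is a useful refinement that the paper omits: it makes the integral in Definition~\ref{def:FourierTransform} absolutely convergent for every $\omega$, so that integral agrees with the Plancherel extension.
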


\begin{proof}
For $r > 0$ and $\phi(t) = 0$ for $t < 0$, we have
\[\int_{-\infty}^{\infty} \big(e^{-rt} \phi(t)\big)^2 \, dt =  \int_{0}^{\infty} \big(e^{-rt}\phi(t)\big)^2 \, dt \leq \int_{0}^{\infty} \big(\phi(t)\big)^2 \, dt < \infty.\]
Thus, $\psi(t) := e^{-r t} \phi(t)$ is in $L^2(\mathbb{R})$. Using the result in~\cite{grafakos2014a}, the Fourier transform of $\psi$ is well defined, meaning that the exponentially weighted Fourier transform of $\phi$ is also well defined. 
Applying now the Plancherel theorem to $\psi$, we obtain 
\[\frac{1}{2 \pi} \int_{-\infty}^{\infty} |\hat{\phi}(r+i\omega)|^2 \, d\omega = \int_{-\infty}^{\infty} \big(e^{-rt} \phi(t)\big)^2 \, dt \leq \int_{0}^{\infty} \big(\phi(t)\big)^2 \, dt = \int_{-\infty}^{\infty} \big(\phi(t)\big)^2 \, dt.\]
\end{proof}

To analyze convergence, we apply the exponentially weighted Fourier transform to the error equation associated with~\eqref{eq:DNWR} and derive the contraction factor. The error equation is obtained by subtracting the solution from the model problem~\eqref{eq:CoupledSystem}-\eqref{eq:CoupleCondition}, or equivalently, by setting directly $f_1$, $f_2$, $u_{0,1}$, $u_{0,2}$, $g_l$, and $g_r$ all to zero in~\eqref{eq:DNWR}. Before applying the exponentially weighted Fourier transform to~\eqref{eq:DNWR}, we also need to extend the initial guess $h^0$ to the time interval $(-\infty, \infty)$ by setting it to zero outside its original domain. The iterates $(u_1^k, u_2^k, h^k)$ for $k>0$ are then extended to $(-\infty,\infty)$ through the iteration itself. For brevity, we retain the notation $h^0$ and $(u_1^k, u_2^k, h^k)$ for the extended functions.

For $\omega\in\mathbb{R}$ and $r\in\mathbb{R}^+$, the exponentially weighted Fourier transform applied to the error equation associated with~\eqref{eq:DNWR} gives
\begin{equation}\label{eq:transformedDNWR}
\begin{aligned}
	\alpha_1 (r+i \omega) \hat{u}_1^{k+1} -\lambda_1 \partial_{xx} \hat{u}_1^{k+1} &= 0 \quad \text{ in } (-a,0),\\
	\hat{u}_1^{k+1}(-a, r+i \omega) &= 0,\\
	\hat{u}_1^{k+1}(0, r+i\omega) &= \hat h^{k}(r+i \omega),\\
	\alpha_2 (r+i \omega) \hat{u}_2^{k+1} -\lambda_2 \partial_{xx} \hat{u}_2^{k+1} &= 0 \quad \text{ in } (0,b),\\
	-\lambda_1 \partial_x \hat{u}_1^{k+1}(0, r+i \omega) &= -\lambda_2 \partial_x \hat{u}_2^{k+1}(0, r+i \omega),\\
	u_1^k(b, r+i \omega) &= 0, \\
	\hat{h}^{k+1}(r+i\omega) = (1-\theta) \hat{h}^k&(r+i\omega) + \theta \hat{u}^{k+1}_2(0, r+i \omega).
\end{aligned}
\end{equation}
These are two coupled second-order ordinary differential equations in space, and solving them for given $\omega$ and $r$ yields the closed form solutions
\[\begin{aligned}
	\hat{u}_1^{k+1}(x, r+i \omega) &= \frac{\sinh\left( \sqrt{\frac{\alpha_1 (r+i \omega)}{\lambda_1}} (x +a) \right)}{\sinh\left( \sqrt{\frac{\alpha_1 (r+i \omega)}{\lambda_1}} a \right)} \hat{h}^k(r+i \omega), \\
	\hat{u}_2^{k+1}(x, r+i \omega) &= 
	\sqrt{\frac{\alpha_1 \lambda_1}{\alpha_2 \lambda_2}}  \coth\left( \sqrt{\frac{\alpha_1 (r+i \omega)}{\lambda_1}} a\right) \\
	&\times \frac{\sinh\left( \sqrt{\frac{\alpha_2 (r+i \omega)}{\lambda_2}} (x - b) \right)}{\cosh\left( \sqrt{\frac{\alpha_2 (r+i \omega)}{\lambda_2}} b \right)} \hat{h}^k(r+i \omega).
\end{aligned}\]
Using the last equation in~\eqref{eq:transformedDNWR}, we obtain $\hat{h}^k(r+i \omega) = \hat{\rho}(r+i \omega)^k \hat{h}^0(r+i \omega)$, where $\hat{\rho}$ is given by
\begin{equation}\label{eq:cvfCont}
\hat{\rho}(s) := 1 - \theta - \theta \sqrt{\frac{\alpha_1 \lambda_1}{\alpha_2 \lambda_2}}\coth\left(\sqrt{\frac{\alpha_1 s}{\lambda_1}} a\right) \tanh\left(\sqrt{\frac{\alpha_2 s}{\lambda_2}} b\right), \quad s\in\mathbb{C}.
\end{equation}
The function $\hat{\rho}$ evaluated at $s=r+i\omega$ with $\text{Re}(s) = r>0$ is the contraction factor associated with the transformed DNWR iteration~\eqref{eq:transformedDNWR}. Note that $ \hat{\rho}(r+i \omega)$ depends on the domain size $a$ and $b$, and the physical parameters $\alpha_j$ and $\lambda_j$, $j=1, 2$. 
Regarding the relaxation parameter $\theta$, one wants the associated contraction factor to be small, such that ~\eqref{eq:DNWR} converges very fast. Before discussing the choice of $\theta$, we first present an error estimate for arbitrary relaxation parameter.

\begin{theorem}\label{2normSuperCont}
	For any given $\theta\in (0, 1)$ and $h^0\in L^2(0, T)$, the error of the iteration~\eqref{eq:DNWR} in the time interval $(0, T)$ satisfies the estimate 
	\begin{equation}\label{eq:GenEstCon}
	\|h^k\|_{L^2(0, T)} \leq \min_{r > 0} e^{rT}\max_{\omega \in \mathbb{R}} \left|\hat{\rho}(r + i \omega)\right|^k \, \norm{h^0}_{L^2(0, T)}.
	\end{equation}
\end{theorem}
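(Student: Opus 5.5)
Fix an arbitrary $r>0$; since the right-hand side of~\eqref{eq:GenEstCon} is an infimum over $r$, it suffices to prove
\[
\|h^k\|_{L^2(0,T)} \le e^{rT}\sup_{\omega\in\mathbb{R}}|\hat\rho(r+i\omega)|^k\,\|h^0\|_{L^2(0,T)}
\]
for each fixed $r$. The plan is to work with the extended functions: $h^0$ is extended by zero outside $(0,T)$, so it is supported in $[0,T]$ and lies in $L^2(\mathbb{R})$. By causality of the heat equation with zero data, the iterates $h^k$ vanish for $t<0$. They will generally not vanish for $t>T$, but this does not matter, since only their restriction to $(0,T)$ is needed.

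The key steps, in order, are as follows.

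First, I will check that $h^k$ is in $L^2(\mathbb{R}^+)$, or at least that $e^{-rt}h^k\in L^2(\mathbb{R})$, so that its exponentially weighted transform exists as in Lemma~\ref{prepCont2norm}. The transformed relation $\hat h^k(r+i\omega)=\hat\rho(r+i\omega)^k\hat h^0(r+i\omega)$ derived above then holds. For the bound I only need $\hat h^k$ through this formula, and $\hat\rho(r+i\omega)$ is bounded in $\omega$ for fixed $r>0$, since the coth and tanh factors are bounded away from their poles when $\mathrm{Re}\,s>0$. Hence $\hat h^k(r+i\cdot)\in L^2$, and $e^{-rt}h^k$ is its inverse Fourier transform.

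Second, I will bound the weighted norm via Plancherel:
\[
\int_0^T e^{-2rt}|h^k(t)|^2\,dt \le \int_{\mathbb{R}} |e^{-rt}h^k(t)|^2\,dt=\frac{1}{2\pi}\int_{\mathbb{R}}|\hat\rho(r+i\omega)|^{2k}|\hat h^0(r+i\omega)|^2\,d\omega ,
\]
and the right-hand side is at most $\sup_\omega|\hat\rho|^{2k}\,\frac{1}{2\pi}\int|\hat h^0|^2\,d\omega$. Lemma~\ref{prepCont2norm} bounds the last factor by $\|h^0\|^2_{L^2(\mathbb{R})}=\|h^0\|^2_{L^2(0,T)}$.

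Third, I will remove the weight on the left using $e^{-2rt}\ge e^{-2rT}$ on $(0,T)$. This gives $\|h^k\|^2_{L^2(0,T)}\le e^{2rT}\sup_\omega|\hat\rho|^{2k}\|h^0\|^2_{L^2(0,T)}$. Taking square roots and then the infimum over $r>0$ yields~\eqref{eq:GenEstCon}, where the max over $\omega$ is read as a supremum.

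The main obstacle is the rigor in the first step: justifying that the iterates are causal and that the transformed iteration genuinely represents $e^{-rt}h^k$. Without knowing $h^k\in L^2(\mathbb{R}^+)$ a priori, Lemma~\ref{prepCont2norm} cannot be applied directly to $h^k$. I would handle this by defining $h^k$ through the inverse transform of $\hat\rho^k\hat h^0$ along the line $\mathrm{Re}\,s=r$. A Paley--Wiener argument then shows causality: $\hat\rho$ is analytic and bounded on half-planes $\mathrm{Re}\,s\ge r$, and $\hat h^0$ is in the Hardy space $H^2$ of such a half-plane. Their product therefore stays in $H^2$, and so corresponds to $e^{-rt}$ times a function vanishing for $t<0$. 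This is consistent with solving the error equation by Laplace transform.
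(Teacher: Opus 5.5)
Your proposal is correct and follows essentially the same route as the paper: remove the weight on $(0,T)$ via $e^{-2rt}\ge e^{-2rT}$, apply Plancherel to $e^{-rt}h^k$, use $\hat h^k=\hat\rho^k\hat h^0$ together with the supremum of $|\hat\rho(r+i\omega)|$, and finish with Lemma~\ref{prepCont2norm} applied to $h^0$. The differences are minor: the paper only cites Lions--Magenes for existence of $h^k$ and applies Lemma~\ref{prepCont2norm} to $h^k$ directly, whereas you justify causality and square integrability of $e^{-rt}h^k$ with a Paley--Wiener/Hardy-space argument, and you correctly write the final step for $h^0$ as an inequality where the paper writes an equality.
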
 

\begin{proof}
	By \cite[Chapter 4]{Lions72}, $h^k$ exists and is well defined for $h^0 \in L^2(0,T)$. For $r > 0$, we have
	\[\begin{aligned}
		\|h^k\|_{L^2(0, T)}^2 = \int_{0}^{T} \left(h^{k}(t)\right)^2 dt = \int_{0}^{T} e^{2rt} e^{-2 r t} \left(h^{k}(t)\right)^2 &dt \leq e^{2rT} \int_{0}^{T} \left(e^{-r t}h^{k}(t)\right)^2 dt \\
		&\leq e^{2rT} \int_{-\infty}^{\infty} \left( e^{- r t} h^{k}(t)\right)^2 dt,
	\end{aligned}\]
	where in the last inequality we use the  extension of $h^k$ to the interval $(-\infty,\infty)$. By Lemma~\ref{prepCont2norm}, the exponentially weighted Fourier transform of $h^k$ exists. Applying the Plancherel theorem, we find
	\[\begin{aligned}
		e^{2rT} \int_{-\infty}^{\infty} \left( e^{- r t}h^{k}(t)\right)^2 dt 
		&= \frac{e^{2rT}}{2 \pi} \int_{-\infty}^{\infty} \left| \hat{h}^{k}(r + i \omega) \right|^2 d\omega \\
		&= \frac{e^{2rT}}{2 \pi} \int_{-\infty}^{\infty} \left| \hat{\rho}(r + i \omega)^k \hat{h}^0(r + i \omega) \right|^2 d\omega \\
		&\leq \frac{e^{2rT}}{2 \pi} \max_{\omega \in \mathbb{R}} \left|\hat{\rho}(r  + i\omega)\right|^{2k} \int_{-\infty}^{\infty} \left| \hat{h}^{0}(r + i \omega) \right|^2 d\omega \\
		&= e^{2rT} \max_{\omega \in \mathbb{R}} \left|\hat{\rho}(r + i\omega)\right|^{2k}\int_{-\infty}^{\infty} \left| h^{0}(t) \right|^2 dt \\
		&= e^{2rT} \max_{\omega \in \mathbb{R}} \left|\hat{\rho}(r + i\omega)\right|^{2k}\int_{0}^{T} \left| h^{0}(t) \right|^2 dt,
	\end{aligned}\]
	where we use Lemma~\ref{prepCont2norm} once again, and the fact that $h^0$ is extended from $(0, T)$ to $(-\infty, \infty)$ by 0 for the last equality. Combining these computations gives 
	\[\|h^k\|_{L^2(0, T)}^2 \leq e^{2rT} \max_{\omega \in \mathbb{R}} |\hat{\rho}(r + i\omega)|^{2k} \|h^0\|_{L^2(0, T)}^2.\]
	Taking the square root on both sides and minimizing over all $r>0$ leads to~\eqref{eq:GenEstCon}.
\end{proof}

Theorem~\ref{2normSuperCont} provides a general error estimate of~\eqref{eq:DNWR} for any relaxation parameter. To achieve fast convergence, we can choose a relaxation parameter $\theta$ such that the convergence factor~\eqref{eq:cvfCont} is small. In particular, the choice
\begin{equation}\label{eq:thtopt}
\theta = \frac1{1 + \sqrt{\frac{\alpha_1 \lambda_1}{\alpha_2 \lambda_2}}\coth\left(\sqrt{\frac{\alpha_1 (r+i \omega)}{\lambda_1}} a\right) \tanh\left(\sqrt{\frac{\alpha_2 (r+i \omega)}{\lambda_2}} b\right)}
\end{equation}
yields convergence in two iterations. However, this choice depends both on the weight parameter $r$ and the Fourier frequency $\omega$, making it not useful in practice. On the other hand, when the physics in both subdomains satisfy $\sqrt{\alpha_1/\lambda_1} a = \sqrt{\alpha_2/\lambda_2} b$, then the contraction factor~\eqref{eq:cvfCont} reduces to the simple form $\hat{\rho} = 1 - \theta - \theta\sqrt{\alpha_1 \lambda_1}/\sqrt{\alpha_2 \lambda_2}$, which is independent of the frequency $\omega$ and the weight parameter $r$. In this case, we obtain the optimal relaxation parameter
\begin{equation}\label{eq:thetaopt}
	\theta^* := \frac{\sqrt{\alpha_2\lambda_2}}{\sqrt{\alpha_1\lambda_1}+\sqrt{\alpha_2\lambda_2}}.
\end{equation}
Note that the choice $\theta$ given in~\eqref{eq:thtopt} converges to $\theta^*$ when $\omega$ goes to infinity.
Therefore, for high frequencies, the DNWR iteration always converges very fast for the choice of relaxation parameter $\theta^*$, and thus the convergence will be independent of the mesh size, as discussed in~\cite[Chapter 4.7, p.174]{Ciaramella2022}. 
Furthermore, the parameter $\theta^*$ is much more convenient to use in practice, as it only depends on the physics of the problem. Therefore, we are interested in the impact of $\theta^*$ on the convergence of~\eqref{eq:DNWR} in more general cases, {\it i.e.}, $\sqrt{\alpha_1/\lambda_1} a \neq \sqrt{\alpha_2/\lambda_2} b$. Substituting $\theta^*$ into ~\eqref{eq:cvfCont} and using properties of the differences of arguments for hyperbolic functions, we obtain
\begin{equation}\label{eq:rhotheta}
\hat{\rho}(s)|_{\theta^*} = -\frac{1}{1+\sqrt{\frac{\alpha_2\lambda_2}{\alpha_1\lambda_1}}}\frac{\sinh\left(\left(\sqrt{\frac{\alpha_1}{\lambda_1}} a - \sqrt{\frac{\alpha_2}{\lambda_2}} b\right) \sqrt{s}\right)}{\sinh\left(\sqrt{\frac{\alpha_1}{\lambda_1}} a\sqrt{s}\right)\cosh\left(\sqrt{\frac{\alpha_2}{\lambda_2}} b\sqrt{s}\right)},
\quad 
s \in \mathbb{C}.
\end{equation}
To simplify the notation, we also introduce the function \begin{equation}\label{def:H}
	\hat H(s) := \frac{\sinh\left(|\alpha-\beta|\sqrt{s}\right)}{\sinh\left(\alpha\sqrt{s}\right)\cosh\left(\beta\sqrt{s}\right)}, \quad s \in \mathbb{C}.
\end{equation}
With the help of~\eqref{def:H}, the error estimate~\eqref{eq:GenEstCon} associated with $\theta = \theta^*$ is then
\begin{equation}\label{eq:SpeEstCon}
\|h^k\|_{L^2(0, T)} \leq \left(\frac{1}{1+\sqrt{\frac{\alpha_2\lambda_2}{\alpha_1\lambda_1}}}\right)^k  \min_{r > 0} e^{rT}\max_{\omega \in \mathbb{R}} \left|\hat H(r + i \omega)\right|^k \, \norm{h^0}_{L^2(0, T)},
\end{equation}
where $\alpha = \sqrt{\alpha_1/\lambda_1} a$ and $\beta = \sqrt{\alpha_2/\lambda_2} b$ in $\hat H$ for \eqref{def:H}. Therefore, it is sufficient to study the min-max problem associated with $\hat H$. We first solve the maximization problem with respect to the frequency $\omega$, then use $r$ as an auxiliary parameter to find a superlinear estimate. The next result shows the global maximum of the maximization problem.

\begin{lemma}\label{lem:max}
	Let $\alpha, \beta, r>0$ be given. Then the function $\hat H$ defined in~\eqref{def:H} satisfies
	$\max_{\omega \in \mathbb{R}}|\hat H(r + i\omega)| = \hat H(r)$.
\end{lemma}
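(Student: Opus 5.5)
We plan to use the Weierstrass infinite product expansions of $\sinh$ and $\cosh$. This writes $\hat H(s)$ as a constant times a ratio of products of factors of the form $1+cs$ with $c>0$. We then show that, for fixed $r>0$, the modulus of each factor of the ratio is nonincreasing in $|\omega|$. The trivial case $\alpha=\beta$, where $\hat H\equiv 0$, is set aside, so assume $\gamma:=|\alpha-\beta|>0$.

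Recall that, for every $z\in\mathbb{C}$,
\[
\sinh(c\sqrt{s})=c\sqrt{s}\prod_{n\ge1}\Bigl(1+\frac{c^2 s}{n^2\pi^2}\Bigr),\qquad
\cosh(c\sqrt{s})=\prod_{n\ge1}\Bigl(1+\frac{4c^2 s}{(2n-1)^2\pi^2}\Bigr),
\]
and both products converge absolutely and locally uniformly. The factors $\sqrt{s}$ cancel, so
\[
\hat H(s)=\frac{\gamma}{\alpha}\,\frac{\prod_{n}\bigl(1+\frac{\gamma^2 s}{n^2\pi^2}\bigr)}{\prod_{n}\bigl(1+\frac{\alpha^2 s}{n^2\pi^2}\bigr)\prod_{n}\bigl(1+\frac{4\beta^2 s}{(2n-1)^2\pi^2}\bigr)}.
\]
No denominator factor vanishes for $\operatorname{Re}s=r>0$. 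At $s=r$ every factor is real and positive, so $\hat H(r)=|\hat H(r)|$.

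The key elementary fact concerns two factors with coefficients $0<a\le b$ and $s=r+i\omega$:
\[
\frac{|1+as|^2}{|1+bs|^2}=\frac{(1+ar)^2+a^2\omega^2}{(1+br)^2+b^2\omega^2}.
\]
This is a Möbius-type function of $\omega^2$. It is nonincreasing in $\omega^2$ exactly when $a^2(1+br)^2\le b^2(1+ar)^2$, that is, when $a(1+br)\le b(1+ar)$, which reduces to $a\le b$. Also, a lone denominator factor $1/|1+bs|$ is clearly nonincreasing in $|\omega|$.

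We therefore pair each numerator factor with a denominator factor of larger coefficient.
\begin{itemize}
\item If $\alpha\ge\beta$, then $\gamma=\alpha-\beta<\alpha$. We pair $\gamma^2/(n^2\pi^2)$ with $\alpha^2/(n^2\pi^2)$, and all cosh factors remain unpaired.
\item If $\beta>\alpha$, then $\gamma=\beta-\alpha<\beta$. Since $(2n-1)\gamma<2n\beta$, we have $\gamma^2/(n^2\pi^2)\le 4\beta^2/((2n-1)^2\pi^2)$. We pair accordingly, and all sinh factors remain unpaired.
\end{itemize}
In either case every paired ratio and every unpaired reciprocal factor has modulus nonincreasing in $|\omega|$. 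Hence each truncated product of order $N$ has modulus maximized at $\omega=0$.

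The main technical point is passing to the limit $N\to\infty$, and the reordering of factors that the pairing requires. We handle this through absolute convergence: each of the three products converges absolutely for fixed $s$, so the regrouped product of paired ratios converges to $\hat H(s)$ times the constant. The inequality $|P_N(r+i\omega)|\le P_N(r)$ for the truncations $P_N$ then passes to the limit pointwise in $\omega$. This gives $|\hat H(r+i\omega)|\le \hat H(r)$ for all $\omega$, with equality at $\omega=0$. That proves the lemma.
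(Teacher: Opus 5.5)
Your proof is correct, but it takes a genuinely different route from the paper.

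\textbf{The paper's route.} The paper's proof is a one-line triangle inequality resting on a cited fact. By \cite{ma:13}, $\hat H$ is the Laplace transform of a positive function $H(t)$. Hence $|\hat H(r+i\omega)|\le\int H(t)e^{-rt}\,dt=\hat H(r)$.

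\textbf{Your route.} You avoid that citation entirely and work from the Weierstrass factorizations. You pair each numerator factor with a denominator factor of larger coefficient, and then use the elementary monotonicity of $|1+as|/|1+bs|$ in $\omega^2$ for $a\le b$. This makes the argument self-contained. Your concern about reordering is unnecessary. Since you pair the $n$-th factor with the $n$-th factor, the order-$N$ truncation of the paired product is exactly the quotient of the order-$N$ truncations of the three products. Convergence to $\hat H(s)$ is therefore immediate, because the denominator limits do not vanish for $\operatorname{Re}s>0$.

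\textbf{What your route gives in addition.} You get more than the lemma: $|\hat H(r+i\omega)|$ is nonincreasing in $|\omega|$. Moreover, for real $x>0$ every factor $(1+ax)/(1+bx)$ and $1/(1+cx)$ is nonincreasing, so $\hat H$ is nonincreasing on $(0,\infty)$. Combined, these give $\sup_{\operatorname{Re}s\ge\sigma_0}|\hat H(s)|=\hat H(\sigma_0)$ in one stroke. That is exactly what Lemma~\ref{lem:maxDisc} needs, whereas the paper proves the real-axis monotonicity there separately by differentiation.

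\textbf{How the two proofs relate.} They are closer than they look. For $0<a\le b$ and $c>0$,
\[
\frac{1+as}{1+bs}=\frac{a}{b}+\Bigl(1-\frac{a}{b}\Bigr)\frac{1}{1+bs}
\]
and $1/(1+cs)$ are Laplace transforms of positive measures. Your truncated products are therefore Laplace transforms of positive measures, obtained by convolution. Your argument is in effect a finite-dimensional, self-contained version of the positivity the paper imports from \cite{ma:13}.

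The paper's version, in exchange, is shorter. It also records positivity of the time-domain kernel $H(t)$ itself, which is the kind of kernel information used in the $L^\infty$ analyses cited in the introduction.
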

\begin{proof}
	In the proof of~\cite[Theorem 2]{ma:13}, it is shown that the function $\hat H(s)$ with $\alpha >0$, $\beta > 0$ and $\text{Re}(s)>0$, is the Laplace transform of a positive function denoted by $H(t)$. Thus, we have
\[\begin{aligned}
\max_{\omega \in \mathbb{R}} |\hat{H}(r + i \omega)| =  \max_{\omega \in \mathbb{R}} \Big| \int_{-\infty}^{\infty} &H(t) e^{-r t - i \omega t} dt \Big|
\leq \max_{\omega \in \mathbb{R}} \int_{-\infty}^{\infty} |H(t)|e^{-rt} dt  \\
&= \int_{-\infty}^{\infty} H(t)e^{-rt} dt  = \hat{H}(r).
\end{aligned}\] 
To see that this is actually an equality, it suffices to substitute $\omega = 0$ into $\hat{H}(r + i \omega)$, which gives $\hat{H}(r)$, and thus the proof is complete.
\end{proof}

Applying Lemma~\ref{lem:max} to \eqref{eq:SpeEstCon}, we have
\begin{equation}\label{eq:min-cont}
\norm{h^k}_{L^2(0, T)} \leq \left(\frac{1}{1+\sqrt{\frac{\alpha_2\lambda_2}{\alpha_1\lambda_1}}}\right)^k\min_{r>0}e^{rT} \left(\hat H(r)\right)^k \norm{h^0}_{L^2(0, T)},
\end{equation}
It remains to treat the minimization problem related to $r$. Depending on the physics of the problem, we have two results.

\begin{theorem}[Case $\sqrt{\alpha_1/\lambda_1} a > \sqrt{\alpha_2/\lambda_2} b$]\label{thm:cont-case1}
If $\theta = \theta^*$ and $\sqrt{\alpha_1/\lambda_1} a > \sqrt{\alpha_2/\lambda_2} b$, then the error of the algorithm~\eqref{eq:DNWR} satisfies 
\begin{equation}\label{eq:ContEst1}
\norm{h^k}_{L^2(0, T)} \leq \left(2\frac{1 - \sqrt{\frac{\alpha_2 \lambda_1}{\alpha_1\lambda_2}}\frac{b}{a}}{1 + \sqrt{\frac{\alpha_2 \lambda_2}{\alpha_1\lambda_1}}} \right)^k e^{-\frac{\alpha_2 k^2b^2}{4\lambda_2 T}} \norm{h^0}_{L^2(0, T)}.
\end{equation}
\end{theorem}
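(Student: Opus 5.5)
We start from \eqref{eq:min-cont}, which already reduces the problem to bounding $\min_{r>0} e^{rT}\hat H(r)^k$, where $\hat H$ is given by \eqref{def:H} with $\alpha = \sqrt{\alpha_1/\lambda_1}\,a$ and $\beta=\sqrt{\alpha_2/\lambda_2}\,b$. Under the hypothesis $\alpha>\beta$ we have $|\alpha-\beta| = \alpha-\beta$, and for real $r>0$ we write $x=\sqrt r$. The plan has three steps:
\begin{itemize}
\item derive an elementary bound on $\hat H(r)$ for real $r$;
\item insert this bound into the min--max expression;
\item choose $r$ explicitly rather than minimizing exactly.
\end{itemize}

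\emph{Step 1 (bound on $\hat H$).} Since $\sinh$ is convex on $[0,\infty)$ and vanishes at $0$, the ratio $\sinh(y)/y$ is increasing. For $0<\alpha-\beta<\alpha$ this gives
\[
\frac{\sinh((\alpha-\beta)x)}{\sinh(\alpha x)} \le \frac{\alpha-\beta}{\alpha}\,\frac{\sinh(\alpha x)}{\sinh(\alpha x)}\cdot\frac{\sinh((\alpha-\beta)x)/((\alpha-\beta)x)}{\sinh(\alpha x)/(\alpha x)} \le \frac{\alpha-\beta}{\alpha}.
\]
We need an exponential factor as well, so the better route is the identity
\[
\sinh((\alpha-\beta)x)=\sinh(\alpha x)\cosh(\beta x)-\cosh(\alpha x)\sinh(\beta x),
\]
which gives $\hat H(r)=1-\coth(\alpha x)\tanh(\beta x)$. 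This form is not easily comparable with $e^{-\beta x}$, so instead we bound directly:
\[
\hat H(r)\le \frac{\sinh((\alpha-\beta)x)}{\sinh(\alpha x)}\cdot\frac{2}{e^{\beta x}}.
\]
Combined with the ratio bound $\sinh((\alpha-\beta)x)/\sinh(\alpha x)\le (\alpha-\beta)/\alpha$, this yields
\[
\hat H(r)\le 2\,\frac{\alpha-\beta}{\alpha}\,e^{-\beta\sqrt r}.
\]
Note that $(\alpha-\beta)/\alpha = 1-\sqrt{\alpha_2\lambda_1/(\alpha_1\lambda_2)}\,b/a$, which is exactly the numerator in \eqref{eq:ContEst1}.

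\emph{Step 2 (reduction to a scalar minimization).} Substituting the bound of Step 1 into \eqref{eq:min-cont} gives
\[
\|h^k\|_{L^2(0,T)} \le \left(2\frac{\alpha-\beta}{\alpha}\cdot\frac{1}{1+\sqrt{\alpha_2\lambda_2/(\alpha_1\lambda_1)}}\right)^k \min_{r>0}\exp\!\left(rT-k\beta\sqrt r\right)\,\norm{h^0}_{L^2(0,T)}.
\]

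\emph{Step 3 (choice of $r$).} The exponent $rT-k\beta\sqrt r$ is a quadratic in $\sqrt r$ and is minimized at $\sqrt r = k\beta/(2T)$. Its minimum value is $-k^2\beta^2/(4T) = -\alpha_2k^2b^2/(4\lambda_2T)$. This produces exactly the factor $e^{-\frac{\alpha_2 k^2b^2}{4\lambda_2 T}}$ in \eqref{eq:ContEst1}, which completes the argument.

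\emph{Main obstacle.} The delicate point is Step 1: we must obtain simultaneously the sharp prefactor $2(\alpha-\beta)/\alpha$ and the decay $e^{-\beta\sqrt r}$. This requires two monotonicity facts, the monotonicity of $\sinh(y)/y$ (equivalently, convexity of $\sinh$ on $[0,\infty)$) and the elementary bound $\cosh(y)\ge e^{y}/2$. Both are routine, but they must be checked to hold uniformly for all $r>0$, not merely asymptotically.
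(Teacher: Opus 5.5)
Your proof is correct and follows the paper's argument. You bound $\sinh((\alpha-\beta)\sqrt r)/\sinh(\alpha\sqrt r)$ by $(\alpha-\beta)/\alpha$ and $1/\cosh(\beta\sqrt r)$ by $2e^{-\beta\sqrt r}$, insert the result into \eqref{eq:min-cont}, and minimize $rT-k\beta\sqrt r$ at $r=k^2\beta^2/(4T^2)$. The only differences are cosmetic: you justify the ratio bound via monotonicity of $\sinh(y)/y$ instead of the paper's auxiliary-function derivative argument, you complete the square instead of differentiating, and the $1-\coth(\alpha x)\tanh(\beta x)$ detour can simply be dropped.
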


\begin{proof}
The idea is to first bound the function $\hat H(r)$ by an exponential function, and then to analyze the exponential function to find a minimum.

Let $0<\alpha<\beta$ and $f(x) = \beta\sinh(\alpha x) - \alpha \sinh(\beta x)$. We have $f'(x) = \alpha\beta(\cosh(\alpha x) - \cosh(\beta x)) < 0$, i.e., $f$ is decreasing. Hence $f(x) \leq f(0) = 0$, $\forall x>0$, yielding $\sinh(\alpha x) / \sinh(\beta x) \leq \alpha / \beta$. Taking $\alpha = \sqrt{\alpha_1/\lambda_1} a - \sqrt{\alpha_2/\lambda_2} b$, $\beta = \sqrt{\alpha_1/\lambda_1} a$ and $x = \sqrt{r}$, we find
\begin{equation}\label{eq:sinhbound}
\frac{\sinh\left((\sqrt{\frac{\alpha_1}{\lambda_1}} a - \sqrt{\frac{\alpha_2}{\lambda_2}} b) \sqrt{r} \right)}{\sinh\left(\sqrt{\frac{\alpha_1}{\lambda_1}} a \sqrt{r}\right)}\leq \frac{\sqrt{\frac{\alpha_1}{\lambda_1}} a - \sqrt{\frac{\alpha_2}{\lambda_2}} b}{\sqrt{\frac{\alpha_1}{\lambda_1}} a} = 1 - \sqrt{\frac{\alpha_2 \lambda_1}{\alpha_1\lambda_2}}\frac{b}{a}.
\end{equation}
On the other hand, using the definition of the hyperbolic cosine, we have
\begin{equation}\label{eq:coshbound}
\frac{1}{\cosh\left(\sqrt{\frac{\alpha_2}{\lambda_2}} b \sqrt{r} \right)} 
= \frac{2 \exp\left(-\sqrt{\frac{\alpha_2}{\lambda_2}} b \sqrt{r}\right)}{1 + \exp \left(-2\sqrt{\frac{\alpha_2}{\lambda_2}} b \sqrt{r} \right)} 
\leq 2 \exp\left(-\sqrt{\frac{\alpha_2}{\lambda_2}} b\sqrt{r}\right).
\end{equation}
The product of~\eqref{eq:sinhbound} and~\eqref{eq:coshbound} to the power $k$ gives
\begin{equation}\label{eq:BoundThm2}
\left(\frac{\sinh\left(\left(\sqrt{\frac{\alpha_1}{\lambda_1}} a - \sqrt{\frac{\alpha_2}{\lambda_2}} b\right) \sqrt{r} \right)}{\sinh\left(\sqrt{\frac{\alpha_1}{\lambda_1}} a \sqrt{r}\right)\cosh\left(\sqrt{\frac{\alpha_2}{\lambda_2}} b \sqrt{r}\right)}\right)^k
\leq 2^k\left(1 - \sqrt{\frac{\alpha_2 \lambda_1}{\alpha_1\lambda_2}}\frac{b}{a}\right)^k \exp\left(-kb\sqrt{\frac{\alpha_2r}{\lambda_2}}\right).
\end{equation}
Therefore, the estimate~\eqref{eq:min-cont} with the definition~\eqref{def:H} of $\hat H$ becomes
\begin{equation}\label{eq:minBoundThm2}
\begin{aligned}
\norm{h^k}_{L^2(0, T)} \leq& \left(\frac{1}{1+\sqrt{\frac{\alpha_2\lambda_2}{\alpha_1\lambda_1}}}\right)^k\min_{r>0}e^{rT} \left(\hat H(r)\right)^k \norm{h^0}_{L^2(0, T)}
\\
\leq& \left(2\frac{1 - \sqrt{\frac{\alpha_2 \lambda_1}{\alpha_1\lambda_2}}\frac{b}{a}}{1 + \sqrt{\frac{\alpha_2 \lambda_2}{\alpha_1\lambda_1}}} \right)^k \min_{r>0}e^{rT - kb\sqrt{\frac{\alpha_2r}{\lambda_2}}}\norm{h^0}_{L^2(0, T)}.
\end{aligned}
\end{equation}

It remains now to find the minimum of $\exp(rT - kb\sqrt{\alpha_2r}/\sqrt{\lambda_2})$. Differentiating it with respect to $r$ gives $(T - k b\sqrt{\alpha_2}/\sqrt{4\lambda_2 r}) \exp(rT - kb \sqrt{\alpha_2 r}/\sqrt{\lambda_2})$. Setting the derivative to 0 yields the extremal point
\begin{equation}\label{eq:ropt-case1}
r^*_{1c} = \frac{\alpha_2k^2 b^2}{4\lambda_2T^2}.
\end{equation}
Note that the derivative changes sign from negative to positive at $r^*_{1c}$, and thus $r^*_{1c}$ is a minimum. Substituting $r^*_{1c}$ into the second inequality in~\eqref{eq:minBoundThm2} gives the estimate~\eqref{eq:ContEst1}.
\end{proof}

\begin{theorem}[Case $\sqrt{\alpha_1/\lambda_1} a < \sqrt{\alpha_2/\lambda_2} b$]\label{thm:cont-case2}
Let $\theta = \theta^*$. If $\sqrt{\alpha_1/\lambda_1} a < \sqrt{\alpha_2/\lambda_2} b < 2\sqrt{\alpha_1/\lambda_1} a$, then the error of the algorithm~\eqref{eq:DNWR} satisfies 
\begin{equation}\label{eq:estimate-case2-1}
\norm{h^k}_{L^2(0, T)} \leq \left(2\frac{\sqrt{\frac{\alpha_2 \lambda_1}{\alpha_1\lambda_2}}\frac{b}{a} - 1}{\sqrt{\frac{\alpha_2 \lambda_2}{\alpha_1\lambda_1}}+1} \right)^k e^{-\frac{\alpha_2 k^2b^2}{4\lambda_2 T}} \norm{h^0}_{L^2(0, T)}.
\end{equation}
If $\sqrt{\alpha_2/\lambda_2} b \geq 2\sqrt{\alpha_1/\lambda_1} a$, then the error of the algorithm~\eqref{eq:DNWR} satisfies 
\begin{equation}\label{eq:estimate-case2-2}
\norm{h^k}_{L^2(0, T)} \leq \left(2\frac{\sqrt{\frac{\alpha_2 \lambda_1}{\alpha_1\lambda_2}}\frac{b}{a} - 1}{\sqrt{\frac{\alpha_2 \lambda_2}{\alpha_1\lambda_1}}+1} \right)^k e^{-\frac{\alpha_1 k^2a^2}{\lambda_1 T}} \norm{h^0}_{L^2(0, T)}.
\end{equation}
\end{theorem}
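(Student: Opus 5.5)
Throughout, write $\alpha := \sqrt{\alpha_1/\lambda_1}\,a$ and $\beta := \sqrt{\alpha_2/\lambda_2}\,b$, so that $0<\alpha<\beta$ and $\hat H(r) = \sinh((\beta-\alpha)\sqrt r)/(\sinh(\alpha\sqrt r)\cosh(\beta\sqrt r))$. Note that $\beta/\alpha - 1 = \sqrt{\frac{\alpha_2\lambda_1}{\alpha_1\lambda_2}}\frac{b}{a}-1$. The plan follows the proof of Theorem~\ref{thm:cont-case1}. Starting from \eqref{eq:min-cont}, I will bound $\hat H(r)$ by $2(\beta/\alpha-1)e^{-c\sqrt r}$ for a suitable $c>0$. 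I will then minimize $e^{rT-kc\sqrt r}$ over $r>0$. Its minimizer is $\sqrt{r^*}=kc/(2T)$, with minimum value $e^{-k^2c^2/(4T)}$. To obtain \eqref{eq:estimate-case2-1} we need $c=\beta$; to obtain \eqref{eq:estimate-case2-2} we need $c=2\alpha$, since $(2\alpha)^2/4=\alpha^2=\alpha_1a^2/\lambda_1$.

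\emph{First case, $\alpha<\beta<2\alpha$.} Here $0<\beta-\alpha<\alpha$. The monotonicity argument in the proof of Theorem~\ref{thm:cont-case1} (the fact that $f(x)=\beta'\sinh(\alpha' x)-\alpha'\sinh(\beta' x)\le 0$ for $0<\alpha'<\beta'$) therefore applies with $\alpha'=\beta-\alpha$ and $\beta'=\alpha$. This gives $\sinh((\beta-\alpha)\sqrt r)/\sinh(\alpha\sqrt r)\le \beta/\alpha-1$. Combining this with \eqref{eq:coshbound} yields $\hat H(r)\le 2(\beta/\alpha-1)e^{-\beta\sqrt r}$. The minimization over $r$ is then identical to the one giving \eqref{eq:ropt-case1}, namely $r^*=\beta^2k^2/(4T^2)$, and this produces \eqref{eq:estimate-case2-1}.

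\emph{Second case, $\beta\ge 2\alpha$.} Now $\gamma:=\beta-\alpha\ge\alpha$, so the ratio $\sinh(\gamma x)/\sinh(\alpha x)$ grows like $e^{(\gamma-\alpha)x}$ and is no longer bounded. This is the main obstacle. I would handle it by pulling out exponentials: for $x>0$,
\[
\frac{\sinh(\gamma x)}{\sinh(\alpha x)} = e^{(\gamma-\alpha)x}\,\frac{1-e^{-2\gamma x}}{1-e^{-2\alpha x}}.
\]
The second factor should be bounded by $\gamma/\alpha$. Substitute $y=2x$ and consider $\phi(y)=(1-e^{-\gamma y})/(1-e^{-\alpha y})$. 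Then $\phi(y)\to\gamma/\alpha$ as $y\to0^+$ and $\phi(y)\to 1$ as $y\to\infty$, so it suffices to show $\phi$ is nonincreasing when $\gamma\ge\alpha$. This can be checked by differentiating, or by writing $\phi$ as a ratio of $\int_0^{\gamma}e^{-sy}\,ds$ and $\int_0^{\alpha}e^{-sy}\,ds$ and using that the extra part $\int_\alpha^\gamma e^{-sy}\,ds$ carries faster-decaying weights. With this bound and \eqref{eq:coshbound} we get
\[
\hat H(r)\le \frac{\gamma}{\alpha}\,e^{(\gamma-\alpha)\sqrt r}\cdot 2e^{-\beta\sqrt r} = 2\Big(\frac{\beta}{\alpha}-1\Big)e^{-2\alpha\sqrt r}.
\]
Minimizing $e^{rT-2k\alpha\sqrt r}$ gives $r^*=k^2\alpha^2/T^2$ and the factor $e^{-k^2\alpha^2/T}$, which is \eqref{eq:estimate-case2-2}.

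In both cases, the last step is to multiply by the constant $(1+\sqrt{\alpha_2\lambda_2/(\alpha_1\lambda_1)})^{-k}$ from \eqref{eq:min-cont}. This reproduces the stated prefactor, and the estimates hold for all $k$. The only genuinely new ingredient is the monotonicity of $\phi$ in the second case.
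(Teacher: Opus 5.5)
Your proposal is correct and follows essentially the same route as the paper. In the first case you reuse the bound $\sinh((\beta-\alpha)x)/\sinh(\alpha x)\le\beta/\alpha-1$ from Theorem~\ref{thm:cont-case1}; in the second case the paper uses the same exponential factorization (written via $\tau=e^{-2\sqrt r}$) and the same estimate $(1-e^{-\gamma y})/(1-e^{-\alpha y})\le\gamma/\alpha$ for $\gamma\ge\alpha$, differing from you only cosmetically in proving it through $f(y)=\alpha(1-e^{-\gamma y})-\gamma(1-e^{-\alpha y})$ being nonincreasing with $f(0)=0$ rather than through monotonicity of the ratio itself.
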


\begin{proof}
The idea is the same as in the proof of Theorem~\ref{thm:cont-case1}.
If $\sqrt{\alpha_1/\lambda_1} a < \sqrt{\alpha_2/\lambda_2} b < 2\sqrt{\alpha_1/\lambda_1} a$, then $0<\sqrt{\alpha_2/\lambda_2} b - \sqrt{\alpha_1/\lambda_1} a < \sqrt{\alpha_1/\lambda_1} a$. We can thus follow the same steps as in the proof of Theorem~\ref{thm:cont-case1} by taking $\alpha = \sqrt{\alpha_2/\lambda_2} b - \sqrt{\alpha_1/\lambda_1} a$ and $\beta = \sqrt{\alpha_1/\lambda_1} a$.
We find a similar bound as~\eqref{eq:BoundThm2}, namely
\begin{equation}\label{eq:BoundThm3}
	\left(\frac{\sinh\left(\left(\sqrt{\frac{\alpha_2}{\lambda_2}} b - \sqrt{\frac{\alpha_1}{\lambda_1}} a \right) \sqrt{r} \right)}{\sinh\left(\sqrt{\frac{\alpha_1}{\lambda_1}} a \sqrt{r}\right)\cosh\left(\sqrt{\frac{\alpha_2}{\lambda_2}} b \sqrt{r}\right)}\right)^k
	\leq 2^k\left(\sqrt{\frac{\alpha_2 \lambda_1}{\alpha_1\lambda_2}}\frac{b}{a} - 1\right)^k \exp\left(-kb\sqrt{\frac{\alpha_2r}{\lambda_2}}\right).
\end{equation}
This leads to the same miminizer $r^*_{1c}$ given in~\eqref{eq:ropt-case1}, and we obtain the estimate~\eqref{eq:estimate-case2-1}.

If $\sqrt{\alpha_2/\lambda_2} b \geq 2\sqrt{\alpha_1/\lambda_1} a$, we substitute $\tau=\exp(-2\sqrt{r})$ and obtain 
\[\frac{\sinh\left(\left(\sqrt{\frac{\alpha_2}{\lambda_2}} b - \sqrt{\frac{\alpha_1}{\lambda_1}} a \right) \sqrt{r} \right)}{\sinh\left(\sqrt{\frac{\alpha_1}{\lambda_1}} a \sqrt{r}\right)\cosh\left(\sqrt{\frac{\alpha_2}{\lambda_2}} b \sqrt{r}\right)}
= 2 \tau^{\sqrt{\frac{\alpha_1}{\lambda_1}} a}\frac{1-\tau^{\sqrt{\frac{\alpha_2}{\lambda_2}} b-\sqrt{\frac{\alpha_1}{\lambda_1}} a}}{\left(1+\tau^{\sqrt{\frac{\alpha_2}{\lambda_2}} b}\right)\left(1-\tau^{\sqrt{\frac{\alpha_1}{\lambda_1}} a}\right)}.\] 
The latter fraction can be bounded by
\[\frac{1-\tau^{\sqrt{\frac{\alpha_2}{\lambda_2}} b-\sqrt{\frac{\alpha_1}{\lambda_1}} a}}{\left(1+\tau^{\sqrt{\frac{\alpha_2}{\lambda_2}} b}\right)\left(1-\tau^{\sqrt{\frac{\alpha_1}{\lambda_1}} a}\right)} 
\leq \frac{1-\tau^{\sqrt{\frac{\alpha_2}{\lambda_2}} b-\sqrt{\frac{\alpha_1}{\lambda_1}} a}}{1-\tau^{\sqrt{\frac{\alpha_1}{\lambda_1}} a}}.\] 
Let now $\alpha \geq \beta >0$ and $f(x) = \beta(1 - e^{-\alpha x}) - \alpha(1 - e^{-\beta x})$, then $f'(x) = \alpha\beta(e^{-\alpha x} - e^{-\beta x}) \leq 0$, i.e., $f$ is non-increasing. Hence, $f(x) \leq f(0) = 0$, $\forall x > 0$, yielding $(1 - e^{-\alpha x})/((1 - e^{-\beta x})) \leq \alpha/\beta$. Taking then $\alpha = \sqrt{\alpha_2/\lambda_2} b - \sqrt{\alpha_1/\lambda_1} a$, $\beta = \sqrt{\alpha_1/\lambda_1} a$ and $x = 2\sqrt{r}$ gives
\begin{equation}\label{eq:bound-exp}
	\frac{1-\tau^{\sqrt{\frac{\alpha_2}{\lambda_2}} b-\sqrt{\frac{\alpha_1}{\lambda_1}} a}}{1-\tau^{\sqrt{\frac{\alpha_1}{\lambda_1}} a}} 
	\leq \frac{\sqrt{\frac{\alpha_2}{\lambda_2}} b-\sqrt{\frac{\alpha_1}{\lambda_1}} a}{\sqrt{\frac{\alpha_1}{\lambda_1}} a} = \sqrt{\frac{\alpha_2 \lambda_1}{\alpha_1\lambda_2}}\frac{b}{a} - 1.
\end{equation} 
This implies that 
\begin{equation}\label{eq:BoundThm4}
\left(\frac{\sinh\left(\left(\sqrt{\frac{\alpha_2}{\lambda_2}} b - \sqrt{\frac{\alpha_1}{\lambda_1}} a \right) \sqrt{r} \right)}{\sinh\left(\sqrt{\frac{\alpha_1}{\lambda_1}} a \sqrt{r}\right)\cosh\left(\sqrt{\frac{\alpha_2}{\lambda_2}} b \sqrt{r}\right)}\right)^k
\leq 2^k \left(\sqrt{\frac{\alpha_2 \lambda_1}{\alpha_1\lambda_2}}\frac{b}{a} - 1\right)^k \exp\left(-2k\sqrt{\frac{\alpha_1r}{\lambda_1}} a\right).
\end{equation} 
Note that the exponential part is slightly different compared with~\eqref{eq:BoundThm3} in the previous case. Using the bound~\eqref{eq:BoundThm4}, our estimate~\eqref{eq:min-cont} in this case becomes
\begin{equation}\label{eq:minBoundThm3}
\begin{aligned}
\norm{h^k}_{L^2(0, T)} \leq& \left(\frac{1}{1+\sqrt{\frac{\alpha_2\lambda_2}{\alpha_1\lambda_1}}}\right)^k\min_{r>0}e^{rT} \left(\hat H(r)\right)^k \norm{h^0}_{L^2(0, T)}
\\
\leq& \left(2\frac{\sqrt{\frac{\alpha_2 \lambda_1}{\alpha_1\lambda_2}}\frac{b}{a} - 1}{\sqrt{\frac{\alpha_2 \lambda_2}{\alpha_1\lambda_1}}+1}  \right)^k \min_{r>0}e^{rT - 2ka\sqrt{\frac{\alpha_1r}{\lambda_1}}}\norm{h^0}_{L^2(0, T)}.
\end{aligned}
\end{equation}
Once again, we take the derivative with respect to $r$ of the function $\exp(rT - 2ka\sqrt{\alpha_1r}/\sqrt{\lambda_1})$ to find the minimum 
\begin{equation}\label{eq:ropt-case2}
r^*_{2c} = \frac{\alpha_1 k^2a^2}{\lambda_1 T^2}.
\end{equation}
Substituting $r^*_{2c}$ back into the second inequality of~\eqref{eq:minBoundThm3} gives the estimate~\eqref{eq:estimate-case2-2}.
\end{proof}

For $\theta = \theta^*$, the error estimates given in Theorems~\ref{thm:cont-case1} and~\ref{thm:cont-case2} contain both linear and superlinear behavior. When the time $T$ is large, the exponential part in~\eqref{eq:ContEst1}, \eqref{eq:estimate-case2-1}, \eqref{eq:estimate-case2-2} is close to one, and the estimates predict linear convergence. In contrast, the exponential part plays an important role when the time $T$ is small, and we obtain superlinear convergence. 
Furthermore, the weight parameters~\eqref{eq:ropt-case1} and~\eqref{eq:ropt-case2} depend on the subdomain size and physical parameters: $r^*_{1c}$ depends on those in $Q_2$ and $r^*_{2c}$ depends on those in $Q_1$.

On the other hand, the weight parameter $r$ is an auxiliary parameter that is only used in our analysis to derive error estimates. As shown in the proofs of Theorems~\ref{thm:cont-case1} and~\ref{thm:cont-case2}, our approach to treat the minimization problem~\eqref{eq:min-cont} is as follows: we first bound $\hat H(r)$ by a function of $r$, and then solve the resulting minimization problem to obtain $r^*_{1c}$ and $r^*_{2c}$, instead of solving~\eqref{eq:min-cont} directly. We can then substitute $r^*_{1c}$ and $r^*_{2c}$ into the function $\hat H(r)$ in~\eqref{eq:min-cont} to obtain sharper error estimates.

\begin{corollary}\label{thm:min-cont-bis}
For $\theta = \theta^*$, the error of the algorithm~\eqref{eq:DNWR} satisfies 
\begin{equation}\label{eq:min-cont-bis}
\norm{h^k}_{L^2(0, T)} 
\leq \left(\frac{ \hat H(r^*)}{1+\sqrt{\frac{\alpha_2\lambda_2}{\alpha_1\lambda_1}}}\right)^k e^{r^*T}\norm{h^0}_{L^2(0, T)},
\end{equation} 
where $r^*$ denotes $r_{1c}^*$ or $r_{2c}^*$ depending on the value of $\sqrt{\alpha_1/\lambda_1} a$ and $\sqrt{\alpha_2/\lambda_2} b$. 
\end{corollary}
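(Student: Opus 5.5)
This follows directly from the min--max estimate~\eqref{eq:min-cont}. The point is that the minimum over $r>0$ of $e^{rT}(\hat H(r))^k$ is bounded above by its value at any particular admissible $r>0$. So I will simply evaluate at $r=r^*$ rather than bounding $\hat H$ first.

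First I check that the chosen $r^*$ is admissible. Both $r^*_{1c}=\alpha_2k^2b^2/(4\lambda_2T^2)$ from~\eqref{eq:ropt-case1} and $r^*_{2c}=\alpha_1k^2a^2/(\lambda_1T^2)$ from~\eqref{eq:ropt-case2} are strictly positive for $k\ge 1$ and $T>0$. The case $k=0$ is trivial, since both sides of~\eqref{eq:min-cont-bis} then equal $\norm{h^0}_{L^2(0,T)}$ up to the factor $e^{0}=1$. Also, $\hat H(r)\ge 0$ is real for $r>0$, so $(\hat H(r^*))^k$ makes sense.

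The argument then chains three earlier results. Theorem~\ref{2normSuperCont} with $\theta=\theta^*$ and the factorization~\eqref{eq:rhotheta}--\eqref{def:H} give~\eqref{eq:SpeEstCon}. Lemma~\ref{lem:max} replaces $\max_\omega|\hat H(r+i\omega)|$ by $\hat H(r)$, yielding~\eqref{eq:min-cont}. Finally,
\[
\min_{r>0} e^{rT}\bigl(\hat H(r)\bigr)^k\le e^{r^*T}\bigl(\hat H(r^*)\bigr)^k ,
\]
and regrouping the constant factor with $\hat H(r^*)$ inside the $k$-th power gives~\eqref{eq:min-cont-bis}.

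The case assignment is as in the statement. Take $r^*=r^*_{1c}$ when $\sqrt{\alpha_1/\lambda_1}a>\sqrt{\alpha_2/\lambda_2}b$, or when $\sqrt{\alpha_2/\lambda_2}b\in(\sqrt{\alpha_1/\lambda_1}a,\,2\sqrt{\alpha_1/\lambda_1}a)$. Take $r^*=r^*_{2c}$ otherwise. The estimate itself holds for either choice, since the argument uses only positivity of $r^*$.

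There is no real obstacle here. One mild point is worth remarking on: the corollary is genuinely sharper than Theorems~\ref{thm:cont-case1}--\ref{thm:cont-case2}. At the same $r^*$, the bounds~\eqref{eq:BoundThm2} and~\eqref{eq:BoundThm4} show that $e^{r^*T}\hat H(r^*)^k$ is no larger than the explicit right-hand sides of~\eqref{eq:ContEst1}, \eqref{eq:estimate-case2-1} and~\eqref{eq:estimate-case2-2}.
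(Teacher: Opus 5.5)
Your proposal is correct and follows the paper's own argument: the corollary comes from evaluating the right-hand side of \eqref{eq:min-cont} at the admissible point $r=r^*>0$, so that $\min_{r>0}e^{rT}\hat H(r)^k\le e^{r^*T}\hat H(r^*)^k$ gives \eqref{eq:min-cont-bis}, rather than bounding $\hat H(r)$ first as in Theorems~\ref{thm:cont-case1} and~\ref{thm:cont-case2}. There is one harmless citation slip in your closing remark: the comparison with \eqref{eq:estimate-case2-1} uses \eqref{eq:BoundThm3}, not \eqref{eq:BoundThm2} or \eqref{eq:BoundThm4}.
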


Although the error estimate~\eqref{eq:min-cont-bis} is sharper than those in Theorems~\ref{thm:cont-case1} and~\ref{thm:cont-case2}, it is less straightforward to see the linear and superlinear convergence behavior in~\eqref{eq:min-cont-bis}, as the dependence on $T$ is hidden in the value of $r^*$. 

In general, one splits a long simulation time into smaller time windows, performs waveform relaxation methods within each window until a stopping criterion is satisfied, and then moves to the next one. Therefore, we are interested in the values of $T$ that lead to superlinear convergence. The error estimates provided in Theorems~\ref{thm:cont-case1} and~\ref{thm:cont-case2} indicate how one should choose the time window length, which will be further discussed in Section~\ref{sec:4}.

%% file: timeDisc.tex
\section{Time discrete analysis}\label{sec:3}

The error estimates obtained in the continuous setting do not clarify the influence of the time step size. In this section, we provide a detailed time discrete analysis of~\eqref{eq:DNWR}. We discretize the time interval $[0, T]$ with $0=t_0 < t_1 < \ldots < t_N = T$, where $t_n = n\Delta t$ and $\Delta t = T/N$. We denote by $u^n$ the approximation of the solution $u$ at the time $t_n$ and $\boldsymbol{u} = (u^1, \ldots, u^N)^\top$. As before, we use $k$ to denote the index of the waveform iteration. The implicit Euler method applied to~\eqref{eq:DNWR} reads: For $n=0, \ldots, N-1$,
\begin{equation}\label{eq:DNWRIE}
	\begin{aligned}
		\frac{\alpha_1}{\Delta t} (u_1^{n+1, k+1} - u_1^{n, k+1}) - \lambda_1 \partial_{xx} u_1^{n+1, k+1} &= f_1^{n+1} \ \text{ in } \ (-a,0) , \\
		u_1^{n+1, k+1}(-a) = g_l^{n+1}, \quad 
		u_1^{n+1, k+1}(0) &= h^{n+1, k},\\
		u_1^{0, k+1}(x) &= u_{0, 1}(x) \\
		\frac{\alpha_2}{\Delta t} (u_2^{n+1, k+1}-u_2^{n, k+1})-\lambda_2 \partial_{xx} u_2^{n+1, k+1} &= f_2^{n+1} \ \text{ in } \ (0,b),\\
		u_2^{n+1,k+1}(b) = g_r^{n+1}, \quad
		-\lambda_1 \partial_x u_1^{n+1, k+1}(0) &= -\lambda_2 \partial_x u_2^{n+1, k+1}(0), \\
		u_2^{0, k+1}(x) &= u_{0, 2}(x),\\
		h^{n+1, k+1} := (1-\theta) h^{n+1, k} + \theta &u^{n+1, k+1}_2(0),  \quad \theta\in(0, 1).
	\end{aligned}
	\tag{DNWR-IE}
\end{equation}

To remain in a similar analysis framework as in Section~\ref{sec:2}, we keep using the exponentially weighted technique. In the continuous case, we use the Fourier transform; correspondingly, we consider here the discrete-time Fourier transform, which possesses properties, such as linearity, time shifting, differentiation, and the Parseval Theorem. We refer to the monograph~\cite[Chapter 2.9]{Oppenheim1999} for a detailed introduction. To take advantage of the Parseval Theorem, we define the exponentially weighted discrete-time Fourier transform as follows.

\begin{definition}\label{def:DiscFourier}
Let $r>0$, $\Delta t>0$, $\omega \in [-\pi, \pi]$,  and $\{\phi^n\}_{n\in\mathbb{Z}}$ be a given sequence. 
The exponentially weighted discrete-time Fourier transform of $\{\phi^n\}_{n\in\mathbb{Z}}$ is defined by
\[\widehat{\phi}(r\Delta t + i \omega) := \sum_{n\in\mathbb{Z}} \phi^n e^{-n(r\Delta t+i\omega)},\]
if this series is finite. 
\end{definition}

We use the notation wide hat here to denote the exponentially weighted discrete-time Fourier transform to distinguish it from the exponentially weighted Fourier transform in the continuous case. In addition, similar to Definition~\ref{def:FourierTransform}, we write the argument as $r\Delta t + i\omega$ to emphasize the dependence on the weight parameter $r$, the time step size $\Delta t$ and the frequency $\omega$. 
In general, the weight $e^{-rn\Delta t}$ blows up as $n\to -\infty$ for $r>0$. The next Lemma provides a similar result to Lemma~\ref{prepCont2norm}.

\begin{lemma}\label{prepDisc2norm}
Let $r > 0$ and $\boldsymbol{\phi} = \{\phi^n\}_{n\in\mathbb{Z}}$ be a sequence such that $ \sum_{n\in\mathbb{N}} (\phi^n)^2 < \infty$, and $\phi^n=0$ for $n<0$. Then the exponentially weighted discrete-time Fourier transform exists, and we have
\[\frac{1}{2 \pi} \int_{-\pi}^{\pi} |\widehat{\phi}(r\Delta t+i\omega)|^2 d\omega \leq \norm{\boldsymbol{\phi}}_{l^2}^2.\]
\end{lemma}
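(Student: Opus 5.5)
The plan mirrors the proof of Lemma~\ref{prepCont2norm}, with the Plancherel theorem replaced by the Parseval theorem for the discrete-time Fourier transform, see~\cite[Chapter 2.9]{Oppenheim1999}. I will introduce the weighted sequence $\psi^n := e^{-rn\Delta t}\phi^n$, for $n\in\mathbb{Z}$. The exponentially weighted discrete-time Fourier transform of $\boldsymbol{\phi}$ is then exactly the standard discrete-time Fourier transform of $\{\psi^n\}_{n\in\mathbb{Z}}$:
\[
\widehat{\phi}(r\Delta t+i\omega)=\sum_{n\in\mathbb{Z}}\psi^n e^{-in\omega}.
\]
It therefore suffices to show that $\{\psi^n\}\in l^2(\mathbb{Z})$ and to apply Parseval.

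The first step is square summability. Since $\phi^n=0$ for $n<0$, only indices $n\ge 0$ contribute. For these indices $r>0$ and $\Delta t>0$ give $0<e^{-rn\Delta t}\le 1$, so
\[
\sum_{n\in\mathbb{Z}}(\psi^n)^2=\sum_{n\ge 0}e^{-2rn\Delta t}(\phi^n)^2\le\sum_{n\ge 0}(\phi^n)^2=\norm{\boldsymbol{\phi}}_{l^2}^2<\infty .
\]
This is the point where the one-sided support matters: without it the weight would blow up as $n\to-\infty$. The hypothesis is stated with $\sum_{n\in\mathbb{N}}$, so I would remark that $n=0$ is included or handled trivially. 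A single term is finite in any case.

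The second step is existence. For an $l^2$ sequence, the series $\sum_n\psi^n e^{-in\omega}$ converges in $L^2(-\pi,\pi)$ by the Riesz--Fischer theorem, since $\{e^{-in\omega}/\sqrt{2\pi}\}$ is an orthonormal basis. This defines the transform in the $L^2$ sense, just as the continuous transform was defined by extension in Lemma~\ref{prepCont2norm}. I would also note a stronger fact: if $\sum_n|\phi^n|^2<\infty$, then $\phi^n$ is bounded. Because the weight $e^{-rn\Delta t}$ decays geometrically, $\sum_n|\psi^n|<\infty$, so the series even converges absolutely and uniformly in $\omega$. Existence is therefore not a real obstacle.

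The final step applies Parseval, $\frac{1}{2\pi}\int_{-\pi}^{\pi}\big|\sum_n\psi^n e^{-in\omega}\big|^2d\omega=\sum_n(\psi^n)^2$, and combines it with the bound from the first step to obtain the claimed inequality. The only delicate point is the precise sense in which the transform "exists"; the absolute-convergence remark settles it cleanly.
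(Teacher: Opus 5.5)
Your proposal is correct and follows the paper's own argument: weight by $e^{-rn\Delta t}$, use $\phi^n=0$ for $n<0$ together with $e^{-rn\Delta t}\le 1$ for $n\ge 0$ to bound the weighted $l^2$ norm by $\norm{\boldsymbol{\phi}}_{l^2}^2$, then apply Parseval. Your explicit use of the one-sided support makes the argument slightly more careful than the paper's write-up, which leaves it implicit by writing $e^{-r\Delta t}$ without the index $n$. The same holds for your absolute-convergence remark, but neither changes the substance.
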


\begin{proof}
For $r>0$ and $\Delta t> 0$, $e^{-r \Delta t} < 1$. 
We then have $\sum_{n\in\mathbb{Z}}(e^{-r \Delta t} \phi^n)^2 \leq \sum_{n\in\mathbb{Z}} (\phi^n)^2 < \infty$. The time-discrete Fourier transform of the sequence $e^{-r \Delta t } \boldsymbol{\phi}$ therefore exists. Applying Parseval's theorem to $e^{-r \Delta t } \boldsymbol{\phi}$, we find
\[\frac{1}{2\pi} \int_{-\pi}^{\pi} \left|\widehat{\phi}(r \Delta t + i\omega)\right|^2 d \omega= \sum_{n\in\mathbb{Z}} (e^{-r \Delta t n} \phi^{n})^2 \leq \sum_{n\in\mathbb{Z}} (\phi^{n})^2 = \norm{\boldsymbol{\phi}}_{l^2}^2.\]
\end{proof}

To analyze convergence in the time discrete case, we focus on the error equation associated with~\eqref{eq:DNWRIE}, which consists of setting $f_1^n, f_2^n, g_l^n, g_r^n, u_{0, 1}, u_{0, 2}$ all to zero in~\eqref{eq:DNWRIE}. We also extend the initial guess $\{h^{n, 0}\}_{n=0}^{N}$ by 0 to $\boldsymbol{h}^0 :=\{h^{n,0}\}_{n\in\mathbb{Z}}$. The iterates $\{u_1^{n, k}, u_2^{n, k}, h^{n, k}\}_{n=0}^N$ for $k>0$ are then extended for $n\in\mathbb{Z}$ through the iteration itself. For brevity, we retain the notation $h^{n, 0}$ and $u_1^{n, k}, u_2^{n, k}, h^{n, k}$ for the extended version. 
The exponentially weighted discrete-time Fourier transform applied to the error equations gives
\begin{equation}\label{eq:transformedDNWRIE}
\begin{aligned}
	\frac{\alpha_1}{\Delta t} (e^{r \Delta t + i \omega} \widehat u^{k+1}_1 - \widehat u^{k+1}_1) - \lambda_1  \partial_{xx} e^{r \Delta t + i \omega} \widehat u^{k+1}_1 &= 0, \\
	\widehat u^{k+1}_1(-a) = 0, \quad
	\widehat u^{k+1}_1(0) &= \widehat h^k, \\
	\frac{\alpha_2}{\Delta t} (e^{r \Delta t + i \omega} \widehat u_2^{k+1} - \widehat u_2^{k+1}) - \lambda_2  \partial_{xx} e^{r \Delta t + i \omega} \widehat u_2^{k+1} &= 0, \\
	\widehat u^{k+1}_2(b) = 0, \quad
	-\lambda_1 \partial_x \widehat u^{k+1}_1(0) &= -\lambda_2 \partial_x \widehat u^{k+1}_2(0), \\
	\widehat h^{k+1} = (1 - \theta)\,\widehat h^{k} + &\theta\,\widehat u^{k+1}_2(0).
\end{aligned}
\end{equation}
Similar to~\eqref{eq:transformedDNWR}, these are two coupled second-order ordinary differential equations in space, and solving them separately yields the closed form solutions
\[\begin{aligned}
	\widehat u_1^k(x) &= \frac{\sinh\left((x+a)\sqrt{\frac{\alpha_1(1-e^{-r \Delta t - i\omega})}{\lambda_1\Delta t}}\right)}{\sinh\left(a\sqrt{\frac{\alpha_1(1-e^{-r \Delta t -i \omega})}{\lambda_1\Delta t}}\right)} \widehat h^k, \\
	\widehat u_2^k(x) &= \sqrt{\frac{\alpha_1\lambda_1}{\alpha_2\lambda_2}}\frac{\cosh\left(a\sqrt{\frac{\alpha_1(1-e^{-r \Delta t -i \omega})}{\lambda_1\Delta t}}\right)}{\sinh\left(a\sqrt{\frac{\alpha_1(1-e^{-r \Delta t -i \omega})}{\lambda_1\Delta t}}\right)}\frac{\sinh\left((x-b)\sqrt{\frac{\alpha_2(1-e^{-r \Delta t -i \omega})}{\lambda_2\Delta t}}\right)}{\cosh\left(b\sqrt{\frac{\alpha_2(1-e^{-r \Delta t -i \omega})}{\lambda_2\Delta t}}\right)} \widehat h^k.
\end{aligned}\]
Using the last equation in~\eqref{eq:transformedDNWRIE}, we obtain by induction that
\[\widehat h^k = \hat{\rho}\left(\frac{1-e^{-r \Delta t -i \omega}}{\Delta t}\right)^k \widehat h^0,\]
where the function $\hat{\rho}$ is defined in~\eqref{eq:cvfCont}. Similar to the continuous case, the convergence factor $\hat{\rho}((1-e^{-r \Delta t -i \omega})/\Delta t)$ depends on the subdomain sizes and the physical parameters that are all hidden in the function $\hat{\rho}$. Besides, the convergence factor also depends on the time step size $\Delta t$. Our goal is to understand how the choice of time step size influences the convergence behavior, and the relationship between the continuous and time discrete analysis. As in Theorem~\ref{2normSuperCont}, we first present an error estimate of the time discrete iteration~\eqref{eq:DNWRIE}
for arbitrary relaxation parameter $\theta\in(0, 1)$. 

\begin{theorem}\label{superLinearTimeDisc}
For any given $\theta\in(0, 1)$, the error of the time discrete iteration \eqref{eq:DNWRIE} satisfies the estimate 
\begin{equation}\label{eq:GenEstDis}
\|\boldsymbol{h}^k\|_{l^2} \leq \min_{r > 0} e^{rT}\max_{\omega \in [-\pi,\pi]} \left|\hat{\rho}\left(\frac{1-e^{-r \Delta t -i \omega}}{\Delta t}\right)^k\right| \|\boldsymbol{h}^0\|_{l^2},
\end{equation}
where $\hat{\rho}$ is defined in~\eqref{eq:cvfCont}.
\end{theorem}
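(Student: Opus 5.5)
The plan is to mirror the proof of Theorem~\ref{2normSuperCont}, replacing the exponentially weighted Fourier transform and Lemma~\ref{prepCont2norm} by their discrete counterparts from Definition~\ref{def:DiscFourier} and Lemma~\ref{prepDisc2norm}. The $l^2$ norm is understood over the time indices $n=1,\dots,N$ (equivalently $0,\dots,N$, since the error vanishes at $n=0$). First note that the iterates of \eqref{eq:DNWRIE} are well defined: at each step one solves a two-point boundary value problem for an elliptic operator $\alpha_j/\Delta t - \lambda_j\partial_{xx}$. By causality of the implicit Euler recursion, the extended sequences $h^{n,k}$ vanish for $n<0$, because the zero-extended data generate zero iterates there. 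They are also square summable over $n\ge 0$. For indices beyond $N$ the recursion is driven by zero data but continues to produce finitely many or decaying values; in any case they lie in $l^2$, which one checks from the stability of implicit Euler for the subdomain solves. Hence Lemma~\ref{prepDisc2norm} applies to every $\boldsymbol{h}^k$.

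Fix $r>0$. For $1\le n\le N$ we have $1\le e^{2rn\Delta t}\le e^{2rT}$, so
\[\sum_{n=1}^{N}(h^{n,k})^2\le e^{2rT}\sum_{n=1}^{N}\big(e^{-rn\Delta t}h^{n,k}\big)^2\le e^{2rT}\sum_{n\in\mathbb{Z}}\big(e^{-rn\Delta t}h^{n,k}\big)^2 .\]
By Parseval's theorem applied to $e^{-rn\Delta t}h^{n,k}$, the last sum equals $\frac{1}{2\pi}\int_{-\pi}^{\pi}|\widehat h^k(r\Delta t+i\omega)|^2\,d\omega$. We then insert the relation $\widehat h^k=\hat\rho\big((1-e^{-r\Delta t-i\omega})/\Delta t\big)^k\widehat h^0$, which was derived above from \eqref{eq:transformedDNWRIE}. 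Pulling out the maximum over $\omega\in[-\pi,\pi]$ of $|\hat\rho(\cdot)|^{2k}$, and applying Lemma~\ref{prepDisc2norm} to $\boldsymbol{h}^0$, bounds the remaining integral by $\|\boldsymbol{h}^0\|_{l^2}^2$. Here we use that $\boldsymbol{h}^0$ is supported on $0\le n\le N$. Taking square roots and minimizing over $r>0$ gives \eqref{eq:GenEstDis}.

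The main obstacle is justifying that the transformed iteration \eqref{eq:transformedDNWRIE} is legitimate for every $\omega$ and that the maximum over $\omega$ is finite. For this we need the spatial ODE solutions to exist, i.e.\ $\sinh\big(a\sqrt{\alpha_1 z/\lambda_1}\big)\ne0$ and $\cosh\big(b\sqrt{\alpha_2 z/\lambda_2}\big)\ne0$ with $z=(1-e^{-r\Delta t-i\omega})/\Delta t$. Since $|e^{-r\Delta t-i\omega}|<1$, we get $\mathrm{Re}\,z>0$. Then $\sqrt{z}$, taken as the principal root, lies in the open sector $|\arg|<\pi/4$. The zeros of $\sinh$ and $\cosh$, on the other hand, lie on the imaginary axis, so they cannot occur. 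Continuity in $\omega$ on the compact set $[-\pi,\pi]$ then makes the maximum attained and finite. The same property also legitimizes the time-shift rule $\widehat{\phi^{n-1}}=e^{-(r\Delta t+i\omega)}\widehat\phi$ used in deriving \eqref{eq:transformedDNWRIE}, since all the sums converge absolutely for sequences vanishing at negative indices. If the finite-$l^2$ claim for indices beyond $N$ turned out to be delicate, I would instead truncate: I would redefine the data beyond $N$ to be zero, which by causality does not change $h^{n,k}$ for $n\le N$.
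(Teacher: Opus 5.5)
Your proposal is correct and follows the paper's proof essentially step for step. You bound the sum over $0\le n\le N$ by $e^{2rT}$ times the weighted sum over $\mathbb{Z}$, apply Parseval, insert $\widehat h^k=\hat\rho(\cdot)^k\widehat h^0$, pull out the maximum over $\omega\in[-\pi,\pi]$, and use Lemma~\ref{prepDisc2norm} on $\boldsymbol h^0$ before minimizing over $r$. Your additional checks are sound and go beyond what the paper writes: that $\mathrm{Re}\,z>0$ keeps the $\sinh$ and $\cosh$ denominators away from their zeros, and that the extended iterates are causal and square summable.
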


\begin{proof}
As $r>0$, we have
\begin{align*}
	\|\boldsymbol{h}^k\|_{l^2}^2  = \sum_{n=0}^{N} e^{2 r \Delta t n}e^{-2 r \Delta t n} \left(h^{n,k}\right)^2 
	&\leq e^{2 rT} \sum_{n=0}^{N}  \left(e^{-r \Delta t n}h^{n,k}\right)^2 \\
	&\leq e^{2 rT} \sum_{n\in\mathbb{Z}} \left( e^{- r \Delta t n}h^{n,k}\right)^2,
\end{align*}
where in the last inequality we use the extension of the sequence $\boldsymbol{h}^k$. By Lemma~\ref{prepDisc2norm}, the exponentially weighted discrete-time Fourier transform of $\boldsymbol{h}^k$ exists. Applying the Parseval theorem yields 
\[\begin{aligned}
e^{2rT} \sum_{n\in\mathbb{Z}} \left( e^{- r \Delta t n}h^{n,k}\right)^2 
&= \frac{e^{2rT}}{2 \pi} \int_{-\pi}^{\pi} \left| \widehat{h}^{k}(r \Delta t + i \omega) \right|^2 d \omega \\
&= \frac{e^{2rT}}{2 \pi} \int_{-\pi}^{\pi} \left| \hat{\rho}\left(\frac{1-e^{-r \Delta t -i \omega}}{\Delta t}\right)^k \widehat{h}^{0}(r \Delta t + i \omega) \right|^2 d \omega \\
&\leq \frac{e^{2rT}}{2 \pi} \max_{\omega \in [-\pi,\pi]} \left|\hat{\rho}\left(\frac{1-e^{-r \Delta t -i \omega}}{\Delta t}\right)\right|^{2k}\int_{-\pi}^{\pi} \left| \widehat{h}^{0}(r \Delta t + i \omega) \right|^2  d \omega\\
&\leq e^{2rT} \max_{\omega \in [-\pi,\pi]} \left|\hat{\rho}\left(\frac{1-e^{-r \Delta t -i \omega}}{\Delta t}\right)\right|^{2k} \|\boldsymbol{h}^0\|_{l^2}^2,
\end{aligned}\]
where we use Lemma~\ref{prepDisc2norm} for the last inequality. Combining these computations gives
\[\|\boldsymbol{h}^k\|_{l^2}^2 \leq e^{2rT} \max_{\omega \in [-\pi,\pi]} \left|\hat{\rho}\left(\frac{1-e^{-r \Delta t -i \omega}}{\Delta t}\right)\right|^{2k} \|\boldsymbol{h}^0\|_{l^2}^2.\]
Taking the square root on both sides and minimizing over all $r > 0$ leads to~\eqref{eq:GenEstDis}.
\end{proof}

This estimate involves a min-max problem which is generally difficult to analyze. However, analogous to the continuous case, when the physical parameters in both subdomains satisfy $\sqrt{\alpha_1/\lambda_1}a = \sqrt{\alpha_2/\lambda_2}b$, the optimal relaxation parameter $\theta^*$ given in~\eqref{eq:thetaopt} again ensures convergence in two iterations for the time discrete iteration. Furthermore, we have obtained superlinear convergence in the continuous analysis using such $\theta^*$ for more general physical conditions, {\it i.e.}, $\sqrt{\alpha_1/\lambda_1}a \neq \sqrt{\alpha_2/\lambda_2}b$. We are interested in the convergence behavior of the time discrete iteration~\eqref{eq:DNWRIE} for this special choice of relaxation parameter $\theta^*$ and the dependence on the time step size $\Delta t$.
As in the continuous case, with the help of~\eqref{eq:rhotheta} and~\eqref{def:H}, we can rewrite the error estimate~\eqref{eq:GenEstDis} for $\theta=\theta^*$ as
\begin{equation}\label{eq:SpeEstDis}
\|\boldsymbol{h}^k\|_{l^2} \leq \left(\frac{1}{1+\sqrt{\frac{\alpha_2\lambda_2}{\alpha_1\lambda_1}}}\right)^k \min_{r > 0} e^{rT}\max_{\omega \in [-\pi,\pi]} \left|\hat H\left(\frac{1-e^{-r \Delta t -i \omega}}{\Delta t}\right)^k\right| \|\boldsymbol{h}^0\|_{l^2},
\end{equation} 
where $\alpha = \sqrt{\alpha_1/\lambda_1} a$ and $\beta = \sqrt{\alpha_2/\lambda_2} b$ in the function $\hat H$ defined in~\eqref{def:H}. Therefore, when $\theta=\theta^*$, it suffices to study the min-max problem associated with $\hat H$. We start by giving a similar result to Lemma~\ref{lem:max} for the global maximum of the maximization problem. 

\begin{lemma}\label{lem:maxDisc}
Let $\alpha> 0$, $\beta>0$, $r>0$, and $\Delta t>0$, then the function $\hat{H}$ defined in~\eqref{def:H} satisfies 
\[\max_{\omega \in [-\pi,\pi]} \left| \hat{H} \left(\frac{1-e^{-r \Delta t - i \omega}}{\Delta t}\right) \right| 
= \hat{H} \left(\frac{1-e^{-r \Delta t}}{\Delta t}\right).\]
\end{lemma}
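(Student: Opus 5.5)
We mimic the proof of Lemma~\ref{lem:max}, using the Laplace-transform representation of $\hat H$ from~\cite[Theorem 2]{ma:13}: for $\text{Re}(s)>0$ there is a nonnegative function $H(t)$ with $\hat H(s)=\int_0^\infty H(t)e^{-st}\,dt$. The first step is to check that the argument $s(\omega):=(1-e^{-r\Delta t-i\omega})/\Delta t$ lies in the right half-plane. Indeed,
\[\text{Re}\, s(\omega)=\frac{1-e^{-r\Delta t}\cos\omega}{\Delta t}\geq \frac{1-e^{-r\Delta t}}{\Delta t}=:s_0>0,\]
since $r>0$. So the representation applies along the whole curve $\{s(\omega):\omega\in[-\pi,\pi]\}$, which is the circle of radius $e^{-r\Delta t}/\Delta t$ centred at $1/\Delta t$. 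Its leftmost point is $s(0)=s_0$, which is real.

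The second step is the triangle inequality, exactly as in Lemma~\ref{lem:max}. Since $H\ge 0$ and $|e^{-s t}|=e^{-\text{Re}(s)t}$, we get
\[\left|\hat H(s(\omega))\right|\le \int_0^\infty H(t)e^{-\text{Re}(s(\omega))t}\,dt\le \int_0^\infty H(t)e^{-s_0 t}\,dt=\hat H(s_0).\]
The second inequality uses $\text{Re}\,s(\omega)\ge s_0$ together with the positivity of $H$, so that the map $\sigma\mapsto\int H e^{-\sigma t}\,dt$ is nonincreasing in $\sigma$. This monotonicity is the one extra ingredient compared with the continuous case, where the real part was constant along the line. Taking $\omega=0$ gives $s(0)=s_0$, so the bound is attained and the maximum equals $\hat H(s_0)$. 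Note also that $\hat H(s_0)>0$ is real, so no absolute value is needed.

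The main obstacle is making sure the Laplace representation from~\cite{ma:13} is valid on all of $\text{Re}(s)>0$, including near the imaginary axis if $\Delta t$ is large. Here we only need it for $\text{Re}(s)\ge s_0>0$, where the integral converges absolutely because $\hat H(s_0)<\infty$. By monotone convergence the integral is then finite for every larger real part. If needed, we would verify finiteness directly by noting that $\hat H$ is analytic and bounded on $\text{Re}(s)\ge s_0$: the denominator $\sinh(\alpha\sqrt{s})\cosh(\beta\sqrt{s})$ does not vanish there, since its zeros satisfy $\sqrt{s}\in i\mathbb{R}$, that is, $s\le 0$. We would also handle the case $\alpha=\beta$ separately: then $\hat H\equiv 0$ and the claim is trivial.
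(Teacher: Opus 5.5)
Your proof is correct. It shares the paper's overall reduction: the curve $s(\omega)=(1-e^{-r\Delta t-i\omega})/\Delta t$ lies in the half-plane $\mathrm{Re}\,s\ge s_0$, and the bound is attained at the leftmost point $s(0)=s_0$.

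The difference is in how the two directions are handled. The paper first moves vertically, using Lemma~\ref{lem:max} (which already rests on the positivity of the inverse Laplace transform $H$). It then proves separately that $\hat H$ is nonincreasing on the positive real axis. It does this by differentiating in $\gamma=\sqrt{s}$, with a three-case sign analysis ($\alpha=\beta$, $\alpha>\beta$, $\beta>\alpha$). You get both directions in one step, from $|\hat H(s)|\le\int_0^\infty H(t)e^{-\mathrm{Re}(s)t}\,dt$ and the fact that $\sigma\mapsto\int_0^\infty H(t)e^{-\sigma t}\,dt$ is nonincreasing when $H\ge 0$.

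Since the paper needs positivity of $H$ anyway, your version shows the derivative computation is not logically required for this lemma, and it is shorter. The paper's calculus argument does have one merit. It establishes the real-axis monotonicity by elementary means, independently of the cited positivity result. That monotonicity is reused in Section~\ref{sec:4} to compare $\hat H((1-e^{-r\Delta t})/\Delta t)$ with $\hat H(r)$. Your argument yields the same monotonicity as a byproduct, so nothing downstream is lost.
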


\begin{proof}
Let $g(\omega) =(1-e^{-r \Delta t - i \omega})/ \Delta t$. For $r>0$ and $\Delta t>0$, the image of $g$ is a subset of a shifted half-plane, or more precisely
\[\text{Image}(g) = \left\{g(\omega): \, \omega \in [-\pi,\pi]\right\} \subset \left\{g(0) + x + iy \, : \,x\geq 0, y \in \mathbb{R} \right\} =: S.\]
Thus, we can transfer the maximization problem to the set $S$ as
\[\max_{\omega \in [-\pi,\pi]} \left| \hat{H}\left(g(\omega)\right) \right| \leq \max_{(x,y) \in S} \left| \hat{H} \left(g(0) + x + iy\right) \right| 
=  \max_{x \geq 0} \left| \hat H \left(g(0) + x\right) \right|,\]
where Lemma~\ref{lem:max} was used in the last equality. We now show that $\hat H$ is nonincreasing as a function of a real variable. Let $\gamma$ be a real number. Depending on the value of $\alpha$ and $\beta$, we have three cases.

\begin{enumerate}
\item[(i)] If $\alpha=\beta$, $\hat H = 0$. 

\item[(ii)] If $\alpha>\beta>0$, we have
\[\hat{H}'(\gamma) = \frac{\alpha \sinh(2 \beta \gamma) - \beta \sinh(2\alpha \gamma)}{2 \sinh^2(\alpha \gamma) \cosh^2(\beta \gamma)} < 0,\]
since $\alpha \sinh(2 \beta \gamma) < \beta \sinh(2\alpha \gamma)$. In fact, the function $f(\gamma) := \alpha \sinh(2 \beta \gamma) - \beta \sinh(2\alpha \gamma)$ satisfies
\[f(0) = 0, \quad f'(\gamma) = 2\alpha\beta(\cosh(2 \beta \gamma)-\cosh(2 \alpha \gamma)) < 0,\] 
for $\alpha > \beta >0$. 

\item[(iii)] If $\beta>\alpha>0$, we have
\[\hat{H}'(\gamma) = \frac{\beta \sinh(2 \alpha \gamma) -\alpha \sinh(2\beta \gamma)}{2 \sinh^2(a \gamma) \cosh^2(b \gamma)} < 0,\]
following a similar reasoning as in (ii). 
\end{enumerate}
Therefore, we have
\[\hat{H} \left(g(0) + x\right) \leq  \hat{H} \left(g(0)\right),
\quad \forall x \geq 0.\] 
This implies that
\[\max_{\omega \in [-\pi,\pi]} \left| \hat{H}\left(g(\omega)\right) \right|	
\leq \hat{H}\left(g(0)\right) = \hat{H}\left( \frac{1-e^{-r\Delta t}}{\Delta t} \right).\] 
\end{proof}

Applying Lemma~\ref{lem:maxDisc} to~\eqref{eq:SpeEstDis}, we have 
\begin{equation}\label{eq:min-disc}
\|\boldsymbol{h}^k\|_{l^2} \leq \left(\frac{1}{1+\sqrt{\frac{\alpha_2\lambda_2}{\alpha_1\lambda_1}}}\right)^k \min_{r > 0} e^{rT} \hat H\left(\frac{1-e^{-r \Delta t}}{\Delta t}\right)^k \|\boldsymbol{h}^0\|_{l^2}.
\end{equation}
As in the continuous case, we address the minimization problem for $r$ according to the underlying physics of the problem.

\begin{theorem}[Case $\sqrt{\alpha_1/\lambda_1} a > \sqrt{\alpha_2/\lambda_2} b$]\label{thm:timeDisc-case1}

If $\theta = \theta^*$ and $\sqrt{\alpha_1/\lambda_1} a > \sqrt{\alpha_2/\lambda_2} b$, then the error of the time discrete iterates~\eqref{eq:DNWRIE} satisfies
\begin{equation}\label{eq:DiscEst1}
\|\boldsymbol{h}^k\|_{l^2} \leq  \left(2\frac{1 - \sqrt{\frac{\alpha_2 \lambda_1}{\alpha_1\lambda_2}}\frac{b}{a}}{1 + \sqrt{\frac{\alpha_2 \lambda_2}{\alpha_1\lambda_1}}} \right)^k \phi_1(r_{1d}^*) \norm{\boldsymbol{h}^0}_{l^2},
\end{equation}
with
\begin{equation}\label{eq:roptDisc-case1}
\phi_1(r) = \exp\left( rT  - kb\sqrt{\frac{\alpha_2}{\lambda_2}}\sqrt{\frac{1-e^{-r \Delta t}}{\Delta t}}\right), \ r^*_{1d} = \frac1{\Delta t}\log\left(\frac{1 + \sqrt{\frac{\alpha_2\Delta t}{\lambda_2T^2} b^2 k^2   + 1}}{2}\right).
\end{equation}
\end{theorem}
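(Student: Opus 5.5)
We follow the proof of Theorem~\ref{thm:cont-case1}, now applied to the time discrete min--max bound~\eqref{eq:min-disc}. Set
\[
\sigma(r) := \frac{1-e^{-r\Delta t}}{\Delta t}.
\]
Since $r>0$ and $\Delta t>0$, $\sigma(r)$ is real and positive. This is all the continuous argument used: the bounds~\eqref{eq:sinhbound} and~\eqref{eq:coshbound} hold for any positive real argument $x=\sqrt{\cdot}$.

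The steps are as follows.
\begin{enumerate}
\item Apply~\eqref{eq:sinhbound} and~\eqref{eq:coshbound} with $\sqrt{r}$ replaced by $\sqrt{\sigma(r)}$, using $\alpha=\sqrt{\alpha_1/\lambda_1}\,a>\beta=\sqrt{\alpha_2/\lambda_2}\,b$. This gives the analogue of~\eqref{eq:BoundThm2}:
\[
\hat H(\sigma(r))^k \le 2^k\Bigl(1-\sqrt{\tfrac{\alpha_2\lambda_1}{\alpha_1\lambda_2}}\tfrac ba\Bigr)^k \exp\Bigl(-kb\sqrt{\tfrac{\alpha_2}{\lambda_2}}\sqrt{\sigma(r)}\Bigr).
\]
\item Insert this into~\eqref{eq:min-disc}. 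The prefactor then matches the constant in~\eqref{eq:DiscEst1}, and the remaining factor is $\min_{r>0}\phi_1(r)$.
\item Bound $\min_{r>0}\phi_1(r)\le\phi_1(r^*_{1d})$. Evaluating at any admissible $r>0$ already gives a valid estimate, so we only need $r^*_{1d}>0$. This holds because the argument of the logarithm exceeds $1$.
\end{enumerate}

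The remaining work, and the main obstacle, is to show that $r^*_{1d}$ is the critical point of $\phi_1$, so that the chosen $r$ is natural. Write $c=kb\sqrt{\alpha_2/\lambda_2}$ and $z=e^{-r\Delta t}\in(0,1)$. Setting the logarithmic derivative of $\phi_1$ to zero gives
\[
T=\frac{c\,e^{-r\Delta t}}{2\sqrt{\Delta t}\sqrt{1-e^{-r\Delta t}}},
\]
that is, $4T^2\Delta t(1-z)=c^2z^2$.

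Rewriting in $w=1/z=e^{r\Delta t}$ gives the quadratic
\[
w^2-w-\frac{c^2}{4T^2\Delta t}=0,
\]
whose positive root is
\[
w=\frac{1+\sqrt{1+\frac{c^2}{T^2\Delta t}}}{2}.
\]
This should be reconciled with the stated formula, which has $\alpha_2\Delta t\, b^2k^2/(\lambda_2T^2)$ under the root. I expect this mismatch to come down to the scaling convention of $\Delta t$ or to a typo in the statement. Either way it does not affect validity, since any $r>0$ gives an upper bound.

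Finally, I would check that $\phi_1'$ changes sign from negative to positive at this root, so that it is a minimum. As $\Delta t\to0$ the root should reduce to $r^*_{1c}$, which serves as a consistency check against Theorem~\ref{thm:cont-case1}.
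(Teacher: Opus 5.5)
Your steps 1--3 are exactly the paper's proof. You reuse \eqref{eq:BoundThm2} with $r$ replaced by $\sigma(r)>0$, insert it into \eqref{eq:min-disc} to obtain the stated prefactor times $\min_{r>0}\phi_1(r)$, and evaluate at $r^*_{1d}$. You are also right that this already proves the inequality, because the bound holds for every $r>0$ and $r^*_{1d}>0$. The paper's extra work only shows that $r^*_{1d}$ is the minimizer.

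What goes wrong is your critical-point computation, and with it your suspicion that the statement contains a typo. Since $\sigma'(r)=e^{-r\Delta t}$ and $\sqrt{\sigma(r)}=\sqrt{1-z}/\sqrt{\Delta t}$, you have
\[
\frac{d}{dr}\sqrt{\sigma(r)}=\frac{\sigma'(r)}{2\sqrt{\sigma(r)}}=\frac{\sqrt{\Delta t}\,e^{-r\Delta t}}{2\sqrt{1-e^{-r\Delta t}}},
\]
so $\sqrt{\Delta t}$ belongs in the numerator, not the denominator. Stationarity is then $4T^2(1-z)=c^2\Delta t\,z^2$, i.e.\ $w^2-w-c^2\Delta t/(4T^2)=0$. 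Its positive root is $w=\bigl(1+\sqrt{1+c^2\Delta t/T^2}\bigr)/2$, with $c^2\Delta t/T^2=\alpha_2\Delta t\,b^2k^2/(\lambda_2T^2)$. This is precisely the stated $r^*_{1d}$, so the theorem is correct as written.

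Your planned $\Delta t\to0$ check would have exposed the slip. Your root behaves like $c/(2T\sqrt{\Delta t})$, which forces $r\to\infty$. The correct root is $w=1+c^2\Delta t/(4T^2)+O(\Delta t^2)$, which gives $r\to c^2/(4T^2)=r^*_{1c}$.

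For the sign change, argue as the paper does. Write $\phi_1'(r)=\bigl(T-\tfrac{c}{2}g(r)\bigr)\phi_1(r)$ with $g(r)=\sqrt{\Delta t}\,z/\sqrt{1-z}$. This $g$ decreases strictly from $+\infty$ to $0$ as $r$ runs over $(0,\infty)$. Hence $\phi_1'$ changes sign exactly once, from negative to positive, at $r^*_{1d}$.
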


\begin{proof}
Reusing the bound~\eqref{eq:BoundThm2} given in the proof of Theorem~\ref{thm:cont-case1} with $r = (1-\exp(-r \Delta t))/\Delta t$ yields
\begin{equation}\label{eq:minBoundThm6}
\left(\frac1{1+\sqrt{\frac{\alpha_2\lambda_2}{\alpha_1\lambda_1}}}\right)^k\min_{r>0}e^{rT} \hat H \left(\frac{1-e^{-r \Delta t}}{\Delta t}\right)^k 
\leq \left(2\frac{\bigl|1 - \sqrt{\frac{\alpha_2 \lambda_1}{\alpha_1\lambda_2}}\frac{b}{a}\bigr|}{1 + \sqrt{\frac{\alpha_2 \lambda_2}{\alpha_1 \lambda_1}}} \right)^k\min_{r > 0} \phi_1(r).
\end{equation}
 Differentiating $\phi_1$ yields
\[\phi_1'(r) = \left(T - \frac{kb}{2}\sqrt{\frac{\alpha_2}{\lambda_2}}\sqrt{\frac{\Delta t}{1 - e^{-r \Delta t}}} e^{-r \Delta t} \right)\exp\left(r T -  kb\sqrt{\frac{\alpha_2}{\lambda_2}} \sqrt{\frac{1 - e^{-r \Delta t}}{\Delta t}} \right).\]
Setting the latter to 0 yields the extremal point $r^*_{1d}$ given in~\eqref{eq:roptDisc-case1}. The derivative $\phi_1'(r)$ is negative for $r<r^*_{1d}$ and positive for $r>r^*_{1d}$, since $e^{-r \Delta t}\sqrt{\Delta t/(1 - e^{-r \Delta t})}$ is a decreasing function of $r$. This implies that the extremum is indeed a minimum. Inserting $r^*_{1d}$ into $\phi_1$ in~\eqref{eq:minBoundThm6} gives the estimate~\eqref{eq:DiscEst1}.
\end{proof}

\begin{theorem}[Case $\sqrt{\alpha_1/\lambda_1} a < \sqrt{\alpha_2/\lambda_2} b$]\label{thm:timeDisc-case2}

Let $\theta = \theta^*$. If $\sqrt{\alpha_1/\lambda_1} a < \sqrt{\alpha_2/\lambda_2} b < 2\sqrt{\alpha_1/\lambda_1} a$, then the error of the iterates~\eqref{eq:DNWRIE} satisfies
\begin{equation}\label{eq:DiscEst2-1}
\|\boldsymbol{h}^k\|_{l^2} \leq \left(2\frac{\sqrt{\frac{\alpha_2 \lambda_1}{\alpha_1\lambda_2}}\frac{b}{a} -1}{\sqrt{\frac{\alpha_2 \lambda_2}{\alpha_1\lambda_1}} + 1} \right)^k \phi_1(r^*_{1d}) \|\boldsymbol{h}^0\|_{l^2},
\end{equation}
with $r^*_{1d}$ and $\phi_1$ defined in~\eqref{eq:roptDisc-case1}. 

If $\sqrt{\alpha_2/\lambda_2} b \geq 2\sqrt{\alpha_1/\lambda_1} a$, then the error of the iterates~\eqref{eq:DNWRIE} satisfies
\begin{equation}\label{eq:DiscEst2-2}
\|\boldsymbol{h}^k\|_{l^2} \leq \left(2\frac{\sqrt{\frac{\alpha_2 \lambda_1}{\alpha_1\lambda_2}}\frac{b}{a} - 1}{\sqrt{\frac{\alpha_2 \lambda_2}{\alpha_1\lambda_1}}+1} \right)^k 
\phi_2(r_{2d}^*) \|\boldsymbol{h}^0\|_{l^2},
\end{equation}
with 
\begin{equation}\label{eq:roptDisc-case2}
\phi_2(r) = \exp\left( r T  - ka\sqrt{\frac{\alpha_1}{\lambda_1}}\sqrt{\frac{1-e^{-r \Delta t}}{\Delta t}}\right), \  r^*_{2d} = \frac1{\Delta t}\log\left(\frac{1 + \sqrt{4\frac{\alpha_1  \Delta t}{ \lambda_1 T^2}a^2 k^2 + 1}}{2}\right).
\end{equation}
\end{theorem}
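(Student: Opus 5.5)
The argument mirrors the proof of Theorem~\ref{thm:timeDisc-case1}, now starting from the two bounds established in the proof of Theorem~\ref{thm:cont-case2}. The starting point is the reduced estimate~\eqref{eq:min-disc}. Set $s(r) := (1-e^{-r\Delta t})/\Delta t$. For $r>0$ we have $s(r)>0$, so $s(r)$ is a positive real number. The real-variable bounds for $\hat H$ therefore apply verbatim with $\sqrt{r}$ replaced by $\sqrt{s(r)}$. This is exactly what was done with~\eqref{eq:BoundThm2} in the previous theorem.

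In the first regime, $\sqrt{\alpha_1/\lambda_1}a<\sqrt{\alpha_2/\lambda_2}b<2\sqrt{\alpha_1/\lambda_1}a$, I will substitute $s(r)$ into~\eqref{eq:BoundThm3}. This gives
\[
\hat H(s(r))^k\le 2^k\Bigl(\sqrt{\tfrac{\alpha_2\lambda_1}{\alpha_1\lambda_2}}\tfrac{b}{a}-1\Bigr)^k \exp\Bigl(-kb\sqrt{\tfrac{\alpha_2}{\lambda_2}}\sqrt{s(r)}\Bigr).
\]
Multiplying by $e^{rT}$ produces exactly $\phi_1(r)$. The minimization over $r>0$ is then identical to the one in the proof of Theorem~\ref{thm:timeDisc-case1}, so its minimizer $r^*_{1d}$ can be reused and~\eqref{eq:DiscEst2-1} follows. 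No new computation is needed here.

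In the second regime, $\sqrt{\alpha_2/\lambda_2}b\ge 2\sqrt{\alpha_1/\lambda_1}a$, I will substitute $s(r)$ into~\eqref{eq:BoundThm4}. This gives the factor $\exp(-2ka\sqrt{\alpha_1/\lambda_1}\sqrt{s(r)})$, and the quantity to minimize becomes
\[
\psi(r)=\exp\Bigl(rT-2ka\sqrt{\tfrac{\alpha_1}{\lambda_1}}\sqrt{s(r)}\Bigr).
\]
Setting the logarithmic derivative to zero gives
\[
T=ka\sqrt{\tfrac{\alpha_1}{\lambda_1}}\, e^{-r\Delta t}\sqrt{\tfrac{\Delta t}{1-e^{-r\Delta t}}}.
\]
Write $y=e^{r\Delta t}>1$ and square. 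The equation becomes the quadratic $y^2-y-\frac{\alpha_1 a^2k^2\Delta t}{\lambda_1T^2}=0$. Its root larger than $1$ is
\[
y=\frac{1+\sqrt{1+4\frac{\alpha_1\Delta t}{\lambda_1T^2}a^2k^2}}{2},
\]
and taking the logarithm gives $r^*_{2d}$ as in~\eqref{eq:roptDisc-case2}.

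It remains to check that this critical point is a minimum. The map $r\mapsto e^{-r\Delta t}\sqrt{\Delta t/(1-e^{-r\Delta t})}$ is strictly decreasing on $(0,\infty)$ and tends to $+\infty$ as $r\to0^+$. Hence $\psi'$ is negative before $r^*_{2d}$ and positive after it, so $r^*_{2d}$ is a minimum.

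The main obstacle is the final bookkeeping rather than the analysis. The statement writes the exponent in $\phi_2$ with $ka$ and not $2ka$, while the quadratic above carries the factor $4$ that comes from $2ka$. I would first double-check that $\phi_2$ is meant to reproduce the continuous exponent $-2ka\sqrt{\alpha_1 r/\lambda_1}$ of~\eqref{eq:minBoundThm3}. The consistency check is the limit $\Delta t\to0$: there $r^*_{2d}\to r^*_{2c}$ in~\eqref{eq:ropt-case2}, and this holds precisely with the $2ka$ exponent. If the stated $\phi_2$ (with $ka$) really is intended, then it is larger than $\psi$. In that case the stated estimate still follows from $\min_r\psi\le\psi(r^*_{2d})\le\phi_2(r^*_{2d})$, and it is simply less sharp. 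Either way, inserting $r^*_{2d}$ yields~\eqref{eq:DiscEst2-2}.
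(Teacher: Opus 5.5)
Your proposal is correct and follows the paper's own proof. Like the paper, you substitute $s(r)=(1-e^{-r\Delta t})/\Delta t>0$ into the real-variable bounds \eqref{eq:BoundThm3} and \eqref{eq:BoundThm4} (the latter built from \eqref{eq:bound-exp}) and then minimize the resulting exponential in $r$. Your bookkeeping concern is also justified: the paper asserts that $\phi_2'(r)=0$ yields $r^*_{2d}$, but this holds only with the exponent $2ka$ (consistent with \eqref{eq:minBoundThm3} and with $r^*_{2d}\to r^*_{2c}$), so the $ka$ in $\phi_2$ is evidently a typo, and your observation $\psi(r^*_{2d})\le\phi_2(r^*_{2d})$ shows that the statement as literally written still holds.
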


\begin{proof}
The proof for the case $\sqrt{\alpha_1/\lambda_1} a < \sqrt{\alpha_2/\lambda_2} b < 2\sqrt{\alpha_1/\lambda_1} a$ follows the same steps as that of Theorem~\ref{thm:timeDisc-case1}, except we make use of the bound~\eqref{eq:BoundThm3}. This leads to the same optimized $r^*_{1d}$ given in~\eqref{eq:roptDisc-case1}.

For $\sqrt{\alpha_2/\lambda_2} b \geq 2\sqrt{\alpha_1/\lambda_1} a$, we follow once again the proof of Theorem~\ref{thm:timeDisc-case1}, but use instead  the bound~\eqref{eq:bound-exp}, which gives
\[\left(\frac1{1+\sqrt{\frac{\alpha_2\lambda_2}{\alpha_1\lambda_1}}}\right)^k\min_{r>0}e^{rT} \hat H \left(\frac{1-e^{-r \Delta t}}{\Delta t}\right)^k 
\leq \left(2\frac{\frac{\sqrt{\alpha_2 \lambda_1}b}{\sqrt{\alpha_1\lambda_2}a} - 1}{\sqrt{\frac{\alpha_2 \lambda_2}{\alpha_1\lambda_1}}+1} \right)^k \min_{r > 0} \phi_2(r).\]
Setting then $\phi'_2(r)=0$, we obtain the optimum value of $r^*_{2d}$~\eqref{eq:roptDisc-case2}. Inserting it into $\phi_2$ gives the result~\eqref{eq:DiscEst2-2}.
\end{proof}

Theorems~\ref{thm:timeDisc-case1} and~\ref{thm:timeDisc-case2} provide time discrete error estimates that depend on both time $T$ and step size $\Delta t$. Note that $\phi_1(r^*_{1d})$ and $\phi_2(r^*_{2d})$ are guaranteed to be less than or equal to 1, since $\phi_1$ and $\phi_2$ are continuous for $r>0$ and $1 =\phi_1(0) \geq \min_{r>0} \phi_1(r) = \phi_1(r^*_{1d})$ and $1 =\phi_2(0) \geq \min_{r>0} \phi_2(r) = \phi_2(r^*_{2d})$. Thus, the exponential factor is guaranteed to be nonincreasing. 

Furthermore, the time discrete estimates share the same linear factor as in the continuous estimates given in Theorems~\ref{thm:cont-case1} and~\ref{thm:cont-case2}. Similarly to the continuous case, the weights~\eqref{eq:roptDisc-case1} and~\eqref{eq:roptDisc-case2} depend on the subdomain size and physical parameters. As they are auxiliary parameters that are only used in our analysis to derive error estimates, we can also substitute them directly into~\eqref{eq:min-cont} to obtain sharper error estimates, as in Corollary~\ref{thm:min-cont-bis}.

\begin{corollary}\label{thm:min-disc-bis}
	For $\theta = \theta^*$, the error of the iterates~\eqref{eq:DNWRIE} satisfies
	\begin{equation}\label{eq:min-disc-bis}
			\|\boldsymbol h^k\|_{l^2} 
			\leq \left(\frac{\hat H \left(\frac{1-e^{-r^* \Delta t}}{\Delta t}\right)}{1+\sqrt{\frac{\alpha_2\lambda_2}{\alpha_1\lambda_1}}}\right)^ke^{r^*T} \|\boldsymbol{h}^0\|_{l^2},
	\end{equation} 
	where $r^*$ denotes $r_{1d^*}$ or $r_{2d^*}$ depending on the value of  $\sqrt{\alpha_1/\lambda_1} a$ and $\sqrt{\alpha_2/\lambda_2} b$.
\end{corollary}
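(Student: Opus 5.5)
The proof is a direct specialization of the min--max estimate \eqref{eq:min-disc}. We start from Theorem~\ref{superLinearTimeDisc} with $\theta=\theta^*$. Using the representation \eqref{eq:rhotheta} and the definition \eqref{def:H} with $\alpha=\sqrt{\alpha_1/\lambda_1}\,a$ and $\beta=\sqrt{\alpha_2/\lambda_2}\,b$, this gives \eqref{eq:SpeEstDis}. Lemma~\ref{lem:maxDisc} then removes the maximization over $\omega\in[-\pi,\pi]$ and yields \eqref{eq:min-disc}. That estimate holds with the minimum over $r>0$; in fact, the argument before the minimization holds for every fixed $r>0$, namely
\[
\|\boldsymbol{h}^k\|_{l^2}\leq \left(\frac{1}{1+\sqrt{\frac{\alpha_2\lambda_2}{\alpha_1\lambda_1}}}\right)^k e^{rT}\,\hat H\!\left(\frac{1-e^{-r\Delta t}}{\Delta t}\right)^k\|\boldsymbol{h}^0\|_{l^2}.
\]

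Next we choose $r=r^*$, where $r^*$ is $r^*_{1d}$ from \eqref{eq:roptDisc-case1} if $\sqrt{\alpha_1/\lambda_1}a>\sqrt{\alpha_2/\lambda_2}b$ or $\sqrt{\alpha_1/\lambda_1}a<\sqrt{\alpha_2/\lambda_2}b<2\sqrt{\alpha_1/\lambda_1}a$, and $r^*_{2d}$ from \eqref{eq:roptDisc-case2} if $\sqrt{\alpha_2/\lambda_2}b\ge 2\sqrt{\alpha_1/\lambda_1}a$. To apply the fixed-$r$ bound we only need $r^*>0$. This is immediate: in both cases the argument of the logarithm, $(1+\sqrt{c+1})/2$ with $c>0$, is strictly greater than $1$, so the logarithm is positive. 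Also, $\hat H$ evaluated at the positive real number $(1-e^{-r^*\Delta t})/\Delta t$ is nonnegative, because $\sinh(|\alpha-\beta|x)\ge0$ and $\sinh(\alpha x),\cosh(\beta x)>0$ for $x>0$. Hence $\hat H^k$ equals its $k$-th power of the absolute value, and pulling $\hat H(\cdot)$ into the $k$-th power factor gives exactly \eqref{eq:min-disc-bis}.

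In the degenerate case $\sqrt{\alpha_1/\lambda_1}a=\sqrt{\alpha_2/\lambda_2}b$ we have $\hat H\equiv0$, so the bound holds trivially for any choice of $r^*$.

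There is no real obstacle; the only subtle point is to use \eqref{eq:min-disc} at a fixed admissible $r$ rather than at its minimum. We should also remark why this is sharper than Theorems~\ref{thm:timeDisc-case1} and~\ref{thm:timeDisc-case2}. Evaluating the same fixed-$r$ bound at $r^*$ and then applying the exponential bounds \eqref{eq:BoundThm2}, \eqref{eq:BoundThm3} or \eqref{eq:BoundThm4} with $r$ replaced by $(1-e^{-r^*\Delta t})/\Delta t$ recovers $\phi_1(r^*_{1d})$ or $\phi_2(r^*_{2d})$ times the linear factor. So \eqref{eq:min-disc-bis} is never worse than those theorems.
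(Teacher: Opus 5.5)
Your proposal is correct and follows the paper's own reasoning: the paper justifies the corollary only by remarking that the minimum over $r>0$ in \eqref{eq:min-disc} can be bounded by its value at the auxiliary parameter $r^*$, which is exactly your fixed-$r$ evaluation. Your checks that $r^*>0$ (for $k\ge 1$) and that $\hat H$ is nonnegative on the positive real axis spell out details the paper leaves implicit.
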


Once again, the error estimate~\eqref{eq:min-disc-bis} is sharper than those in Theorems~\ref{thm:timeDisc-case1} and~\ref{thm:timeDisc-case2}, since we directly inject the value $r^*$ (i.e., $r_{1d}^*$ or $r_{2d}^*$) into the function $\hat H$ instead of bounding it first. However, its dependence on the time step size $\Delta t$ is hidden in the value of $r^*$. To compare with the estimates obtained in the continuous case and to understand the impact of $\Delta t$, we still need to use the estimates derived in Theorems~\ref{thm:timeDisc-case1} and~\ref{thm:timeDisc-case2}.

%% file: compare.tex
\section{Comparison of continuous and time discrete estimates}\label{sec:4}

Before comparing our continuous and time discrete error estimates, we first discuss the relation of the convergence factors.  
Recall the function $\hat\rho$ defined in~\eqref{eq:cvfCont}. In the continuous case, the convergence factor is $\hat\rho(r+i\omega)$ with $r>0$ and $\omega\in\mathbb{R}$, while in the time discrete case, it is $\hat \rho((1 - \exp(r\Delta t+i\omega)) / \Delta t)$ with $r>0$, $\Delta t>0$, and $\omega\in[-\pi, \pi]$. We have
\[\lim_{\Delta t\rightarrow 0}\frac{1 - \exp(r\Delta t+i\omega)}{\Delta t} = \lim_{\Delta t\rightarrow 0}\frac{1 - \exp(\Delta t(r+i\frac{\omega}{\Delta t}))}{\Delta t} = r+i\frac{\omega}{\Delta t}.\] 
The time discrete frequency $\omega/\Delta t$ is in $[-\pi/\Delta t, \pi/\Delta t]$, which goes to $\mathbb{R}$ as $\Delta t$ goes to 0, thus the time discrete convergence factor approaches the continuous one.

For $\theta = \theta^*$ defined in~\eqref{eq:thetaopt}, the convergence factor can be simplified using $\hat{H}$ defined in~\eqref{def:H}.
By Lemmas~\ref{lem:max} and~\ref{lem:maxDisc}, the maximum of $\hat H$ in the two cases are $\hat H(r)$ and $\hat H((1-e^{-r\Delta t})/\Delta t)$.
As shown in the proof of Lemma~\ref{lem:maxDisc}, $\hat{H}$ is nonincreasing for real values, and we always have $(1-e^{-r \Delta t})/\Delta t \leq r$. 
This implies that
\[\hat H\left(\frac{1-e^{-r\Delta t}}{\Delta t}\right) \geq \hat H(r),\] 
and thus the time discrete error estimate~\eqref{eq:min-disc} is larger than~\eqref{eq:min-cont}. 

Now we compare the time discrete estimates obtained in Theorems \ref{thm:timeDisc-case1}, \ref{thm:timeDisc-case2} and Corollary \ref{thm:min-disc-bis} with their continuous counterparts Theorems~\ref{thm:cont-case1}, \ref{thm:cont-case2} and Corollary \ref{thm:min-cont-bis}. We focus on the optimal $r^*$ for small and large $T$ and $\Delta t$. As shown in Theorems~\ref{thm:cont-case1}, \ref{thm:cont-case2}, \ref{thm:timeDisc-case1} and \ref{thm:timeDisc-case2}, the optimal $r^*$ is obtained based on the choice of the material parameters and domain sizes. In the case $\sqrt{\alpha_1/\lambda_1} a  >  \sqrt{\alpha_2/\lambda_2} b$, the optimal $r^*$ are $r^*_{1c}$~\eqref{eq:ropt-case1} and $r^*_{1d}$~\eqref{eq:roptDisc-case1}. Taylor expanding $r^*_{1d}$ in $(\alpha_2\Delta t)/(\lambda_2T^2) b^2 k^2$ gives 
\begin{equation}\label{eq:taylor}
r^*_{1d} = \frac1{\Delta t}\log\left(\frac{1 + \sqrt{\frac{\alpha_2\Delta t}{\lambda_2T^2} b^2 k^2   + 1}}{2}\right) = \underbrace{\frac{\alpha_2 b^2 k^2}{4 \lambda_2 T^2}}_{=r^*_{1c}} + O\left(\Delta t \left(\frac{\alpha_2 b^2 k^2}{\lambda_2 T^2}\right)^2\right).
\end{equation}
Thus, $r^*_{1d}$ is close to $r^*_{1c}$ if $\Delta t \left(\alpha_2 b^2 k^2/(\lambda_2 T^2)\right)^2$ is small. Note that the material parameters and the domain size affect the latter term in the same way as $r^*_{1c}$. This acts as a scaling determining which $\Delta t$ and $T$ are considered small or large.

For large $T$, $(\alpha_2\Delta t)/(\lambda_2T^2) b^2 k^2$ is small even for large $\Delta t$. Thus, $r^*_{1d}\approx r^*_{1c}$ independently of $\Delta t$. In addition, $r^*_{1c}\approx 0$ for large $T$, implying linear convergence in~\eqref{eq:ContEst1}, and similarly for $r^*_{1d}$ in~\eqref{eq:DiscEst1}. Note that the linear convergence factor is the same in both the time discrete and continuous estimates. Therefore, for large $T$, both estimates predict linear convergence. The convergence factor is guaranteed to be small when $\lambda_1/\lambda_2$ is small. This was also observed in a fully discrete analysis in \cite{monbir:18}.

For small $T$, the continuous estimate~\eqref{eq:ContEst1} shows superlinear convergence. 
If $\Delta t$ is also small with respect to $T$, then $r_{1d}^*$ is still close to $r_{1c}^*$, which leads to superlinear convergence in the time discrete estimate~\eqref{eq:DiscEst1}.
On the other hand, if $\Delta t$ is not small enough, then $(\alpha_2\Delta t)/(\lambda_2T^2) b^2 k^2$ can become large, implying that the continuous and time discrete error estimates behave qualitatively different.

Similar results are obtained by Taylor expanding $r^*_{2d}$ in $(\alpha_1\Delta t)/(\lambda_1T^2) a^2 k^2$ and comparing with $r^*_{2c}$. 

These theoretical results are illustrated by the following two numerical test cases:
\begin{itemize}
	\item {\bf Case A:} a low contrast test case with physical material parameters $\alpha_1=4$, $\alpha_2=\lambda_1=\lambda_2=1$. 
	
	\item {\bf Case B:} a more realistic high contrast case with physical material parameters for air $\alpha = 1299$ and $\lambda = 0.0243$, and for steel $\alpha = 3.47 \times 10^6$ and $\lambda = 48.9$. The air and steel domains can be arranged either as air-steel or steel-air, depending on whether the Dirichlet condition is imposed on the air or the steel domain.
\end{itemize}
For all our tests, we use the relaxation parameter $\theta^*$ defined in~\eqref{eq:thetaopt} and consider $a = b = 1$, {\it i.e.}, the space domain $(-1, 1)$. We solve the minimization problems given in~\eqref{eq:min-cont} and~\eqref{eq:min-disc} numerically, using the Nelder-Mead optimization algorithm in {\tt SciPy} with an absolute tolerance of $10^{-6}$. The code for these tests can be found in the GitHub repository~\cite{code}.

In {\bf Case A}, the physical parameters satisfy the condition $\sqrt{\alpha_1/\lambda_1} a = 2 > 1= \sqrt{\alpha_2/\lambda_2} b$. Thus, we compare the estimates in \eqref{eq:min-cont}, \eqref{eq:ContEst1} and~\eqref{eq:min-cont-bis} for the continuous case, and~\eqref{eq:min-disc}, \eqref{eq:DiscEst1} and~\eqref{eq:min-disc-bis} for the time discrete case.  We set $T=1/4$ and $\Delta t = 1/64$ and illustrate these estimates as a function of the iteration number $k$ in Figure~\ref{fg:differentErrorEsts}. 
\begin{figure}[t]
	\centering
	\includegraphics[width=0.5\textwidth]{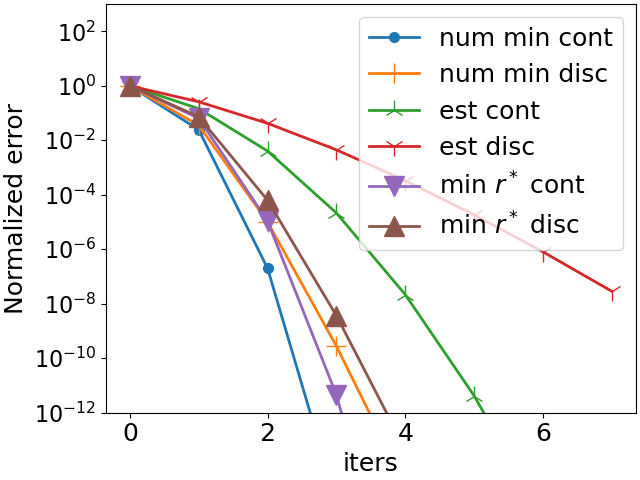}
	\caption{The continuous and time-discrete explicit error estimates as well as the improved version for {\bf Case A} with $\Delta t=1/64$ and $T=1/4$. As a baseline this plot also includes numerically calculated error estimates from Theorem \ref{2normSuperCont} and \ref{superLinearTimeDisc}. }
	\label{fg:differentErrorEsts}
\end{figure}
 We observe that the time discrete error estimate is indeed larger than the continuous estimate in Figure \ref{fg:differentErrorEsts}. 

For both continuous and time discrete error bounds~\eqref{eq:ContEst1} and~\eqref{eq:DiscEst1} shown in green and red curves, we observe superlinear behavior as expected, since the time $T$ is small. However, they are both less accurate compared to~\eqref{eq:min-cont} and~\eqref{eq:min-disc}. To improve it, we use the error bounds in~\eqref{eq:min-cont-bis} and~\eqref{eq:min-disc-bis} shown in purple and brown curves. The latter two are much sharper in both cases compared with~\eqref{eq:ContEst1} and~\eqref{eq:DiscEst1}. Thus, both~\eqref{eq:min-cont-bis} and~\eqref{eq:min-disc-bis} can provide more accurate predictions of the convergence behavior with the help of the auxiliary parameter $r^*$. Similar results are obtained also for other physical parameters using $r^*_{2c}$ and $r^*_{2d}$. In the following experiments, we will use~\eqref{eq:min-cont-bis} and~\eqref{eq:min-disc-bis}. 

Next, we focus on the qualitative impact of the time $T$ and the time step $\Delta t$ on the error estimates~\eqref{eq:min-cont-bis} and~\eqref{eq:min-disc-bis}. To illustrate the dependence on $T$ and $\Delta t$, we show both continuous and time discrete estimates in Figure~\ref{fg:simpleDifferentT}.
\begin{figure}[t]
	\centering
	\mbox{\includegraphics[width=0.33\textwidth]{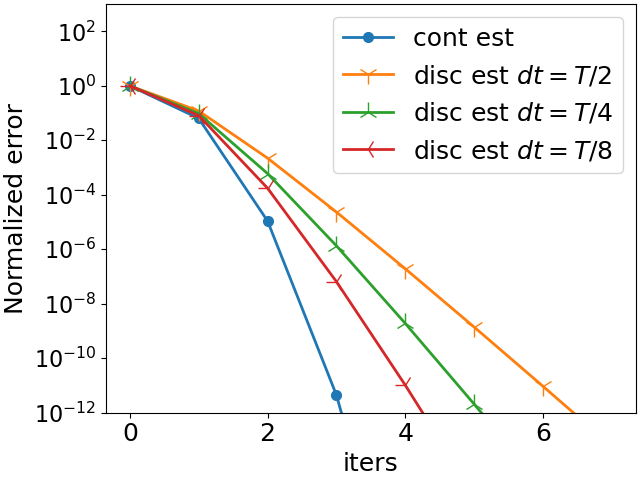}
		\includegraphics[width=0.33\textwidth]{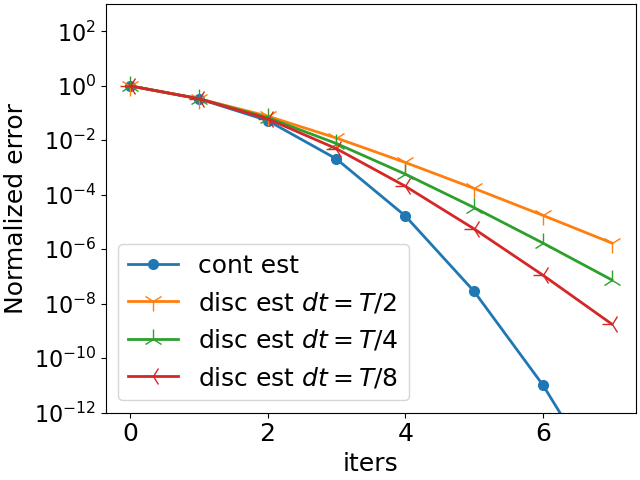}
		\includegraphics[width=0.33\textwidth]{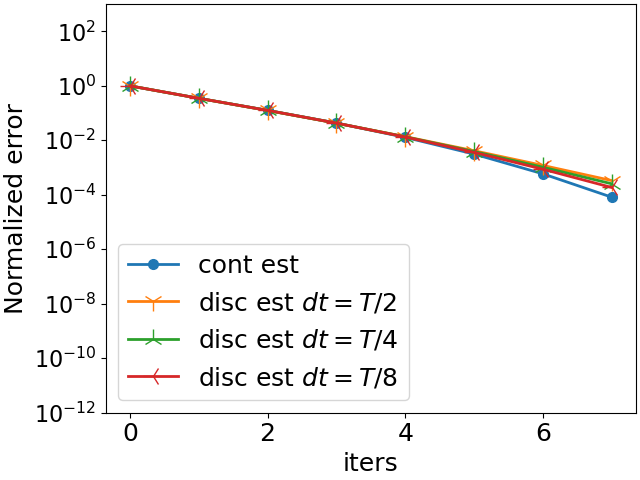}}
	\caption{Continuous and time discrete error estimates for {\bf Case A} with different $\Delta t$ and $T$. Left: $T=1/4$. Middle: $T=1$. Right: $T=4$.}
	\label{fg:simpleDifferentT}
\end{figure}
We observe once again that the time discrete estimate is always larger than the continuous one. When the time step $\Delta t$ is reduced, the time discrete estimate approaches the continuous estimate. 
The choice of $\Delta t$ plays an important role when $T$ is small and the continuous estimate predicts superlinear convergence, here $T=1/4$. A sufficiently small $\Delta t$ is required for the time discrete estimate to capture the same superlinear convergence behavior. Furthermore, both time discrete and continuous error estimates are nearly identical when $T$ is large (in our case $T=1$), and we only observe linear convergence behavior. This is expected since the linear convergence part is the same for both continuous and time discrete error estimates.

To illustrate the impact of the material parameters, we also show both continuous and time discrete estimates for {\bf Case B} in Figure~\ref{fg:realisticDifferentT}.
\begin{figure}[t]
	\centering
	\mbox{\includegraphics[width=0.33\textwidth]{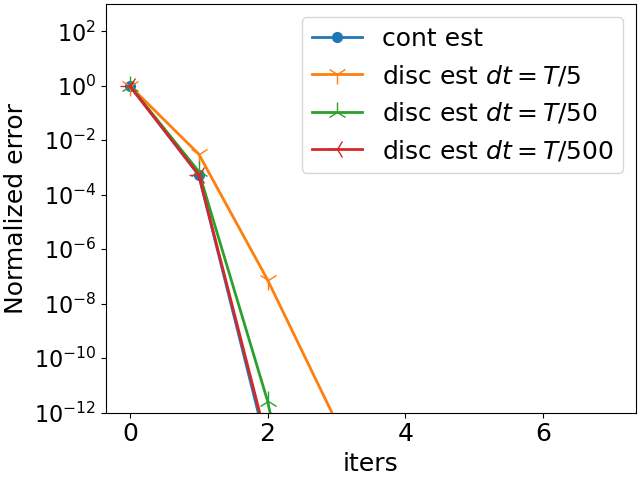}
		\includegraphics[width=0.33\textwidth]{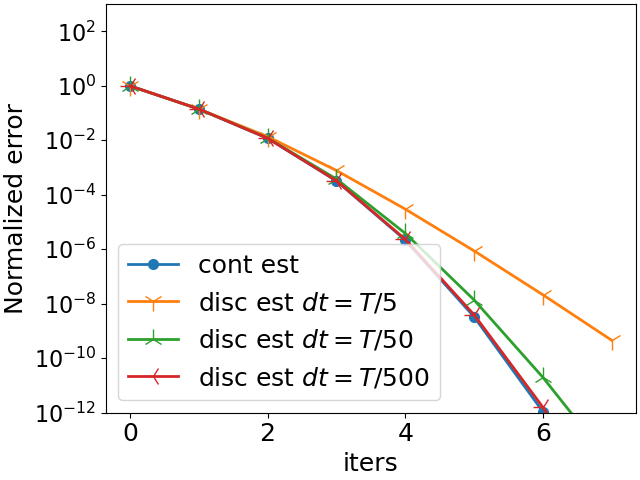}
		\includegraphics[width=0.33\textwidth]{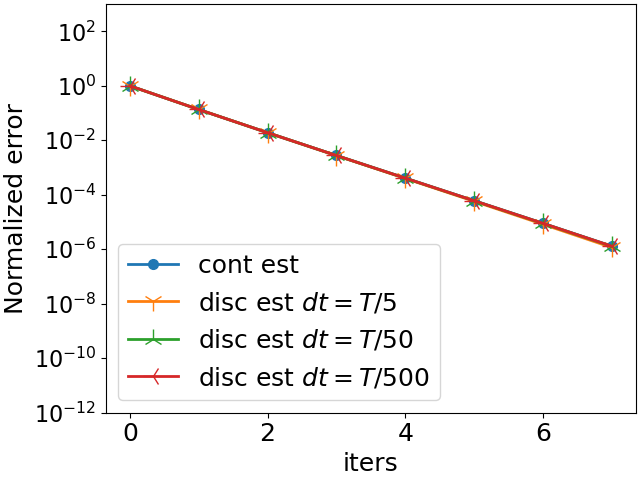}}
	\mbox{\includegraphics[width=0.33\textwidth]{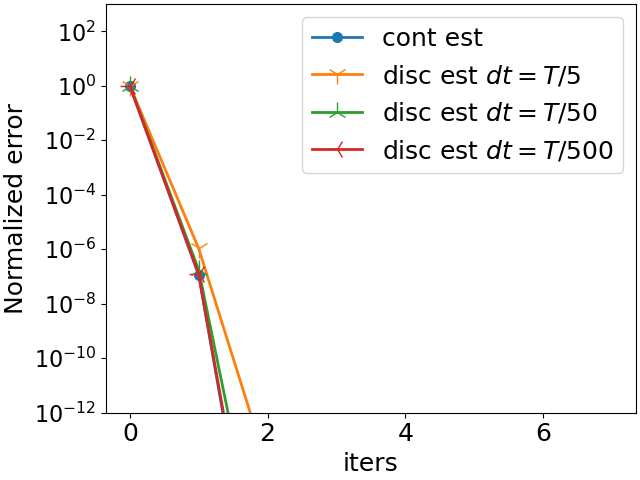}
		\includegraphics[width=0.33\textwidth]{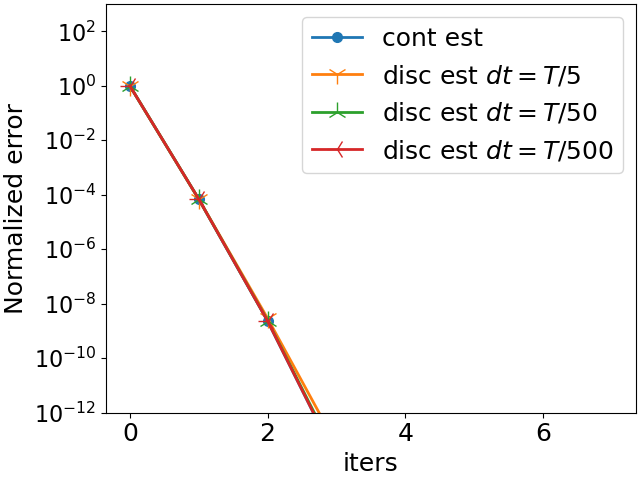}
		\includegraphics[width=0.33\textwidth]{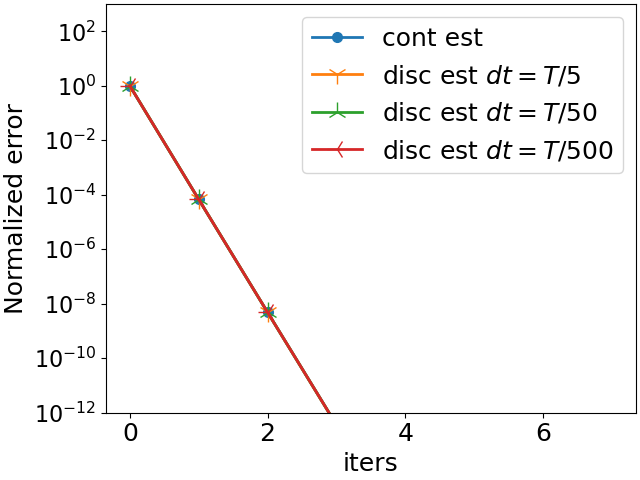}}
	\caption{Continuous and time discrete error estimates for {\bf Case B} with different $\Delta t$ and $T$. Left: $T=5 \times 10^3$. Middle: $T = 5 \times 10^4$. Right: $T = 5 \times 10^5$. Top: steel-air. Bottom: air-steel.}
	\label{fg:realisticDifferentT}
\end{figure}
The steel-air configuration, {\it i.e.}, steel on the left domain with Dirichlet transmission condition and air on the right domain with Neumann transmission condition, exhibits a similar behavior as in {\bf Case A}. To see the same convergence behavior as in {\bf Case A}, both $T$ and $\Delta t$ are required to be much larger for the steel-air configuration. 

The air-steel configuration exhibits very fast linear convergence, reaching the error $10^{-12}$ in less than 3 iterations for all tested $T$ and $\Delta t$. Furthermore, both continuous and time discrete estimates are very close, implying that the choice of $\Delta t$ has less impact on the convergence for the air-steel configuration. This is expected since the linear part in the error estimates is the same for both continuous and time discrete cases, which does not depend on the time step $\Delta t$.

Note that the ratio $\alpha/\lambda$ is much larger in {\bf Case B} (order of 10$^4$) than in {\bf Case A} (order of 1). Thus, one needs to increase $T$ or decrease $\Delta t$ in {\bf Case B} to keep the factor $\Delta t \left(\alpha_2 b^2 k^2/(\lambda_2 T^2)\right)^2$ in~\eqref{eq:taylor} small to see a similar difference between the continuous and time discrete estimates as in {\bf Case A}, as shown in Figure~\ref{fg:simpleDifferentT} and the first row of Figure~\ref{fg:realisticDifferentT}.

Overall, for large values of $T$, both continuous and time discrete estimates exhibit linear convergence, and the choice of $\Delta t$ has little impact on the observed convergence behavior.
In contrast, for small $T$, the continuous estimate shows superlinear convergence, and the choice of $\Delta t$ plays a significant role in the resulting convergence behavior. 
To preserve the superlinear convergence for the time discrete estimate, $\Delta t$ needs to be small with respect to $T$. 
For small $T$ and large $\Delta t$, our experiments show that the time discrete estimate converges linearly, with the convergence factor decreasing as $\Delta t$ decreases.

%% file: experiments.tex
\section{Numerical experiments}\label{sec:5}

We now compare the continuous~\eqref{eq:min-cont-bis} and time discrete estimates~\eqref{eq:min-disc-bis} with the observed convergence behavior of DNWR iterations applied to two coupled linear heat equations with different material parameters. We set the relaxation parameter $\theta = \theta^*$ defined in~\eqref{eq:thetaopt}. We first study a one dimensional case corresponding to the problem described in~\eqref{eq:DNWR} and then extend our study to a two dimensional case.

\subsection{One dimensional experiment}
We use the implicit Euler method to discretize the time domain and the centered finite difference method to discretize the space domain. We consider a very fine spatial mesh size $\Delta x = 10^{-4}$ to ensure that the effect of the time discretization dominates over the spatial discretization. The initial waveform error $\boldsymbol h^0=\{h^{n,0}\}_{n=0}^N$ is set to be random, as discussed in~\cite[Section 5.1]{gander}. The Python code for these experiments is based on~\cite{Me25} and can be found in the GitHub repository~\cite{code}. Figure~\ref{fg:simpleSol} shows the error of the first and the third iteration for {\bf Case A} with $T=2$ and $\Delta t = 1/64$. 
\begin{figure}[t]
	\centering
	\mbox{\includegraphics[width=0.5\textwidth]{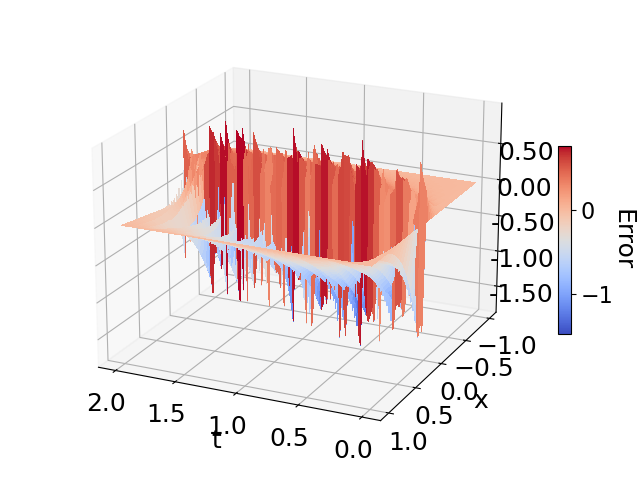}
		\includegraphics[width=0.5\textwidth]{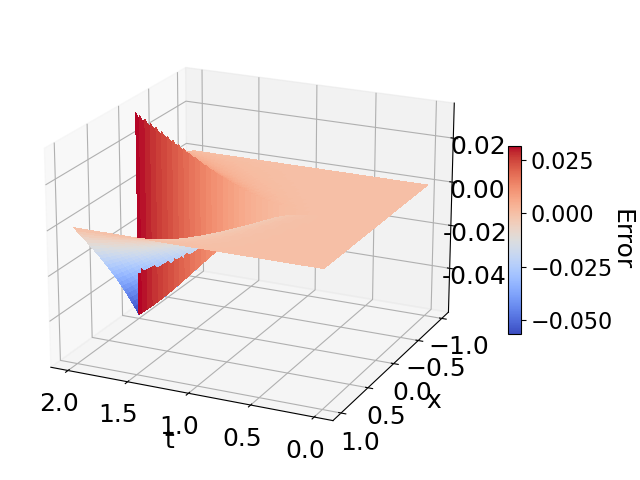}}
	\caption{Error of the DNWR iterations for {\bf Case A} with $T=2$ and $\Delta t = 1/64$. Left: First iteration. Right: Third iteration.}
	\label{fg:simpleSol}
\end{figure}
On the left of Figure~\ref{fg:simpleSol}, the waveform error $\boldsymbol h^0$ after the first iteration is mainly centered around the interface $x=0$ with nonsmooth information. The error is decreased by a factor of $100$ after the third iteration and becomes very smooth. In addition, it increases in $t$ and attains its maximum at $T=2$. This is expected, as the error estimates in both continuous and time discrete cases are increasing with respect to $T$.

We then compare the observed convergence behavior with both continuous and time discrete estimates in Figure~\ref{fg:simpleFullyDisc} for {\bf Case A}. 
\begin{figure}[t]
	\centering
	\mbox{\includegraphics[width=0.33\textwidth]{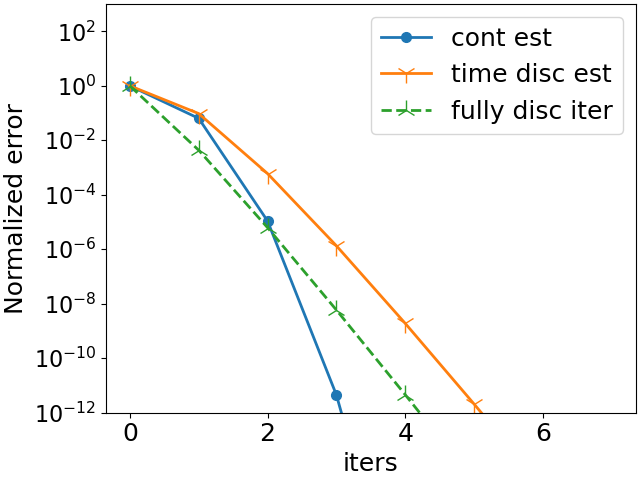}
		\includegraphics[width=0.33\textwidth]{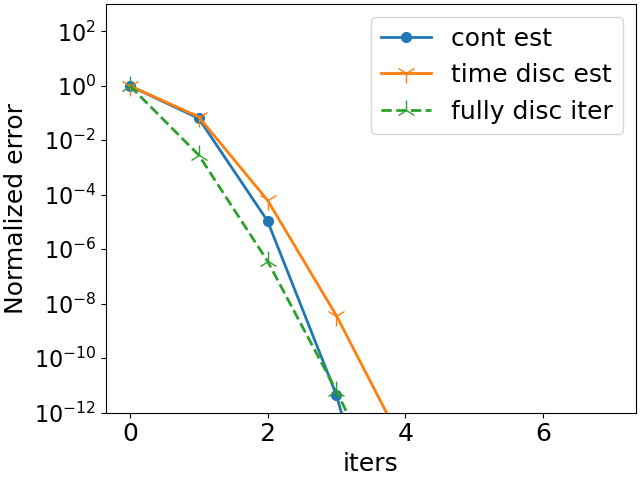}
	\includegraphics[width=0.33\textwidth]{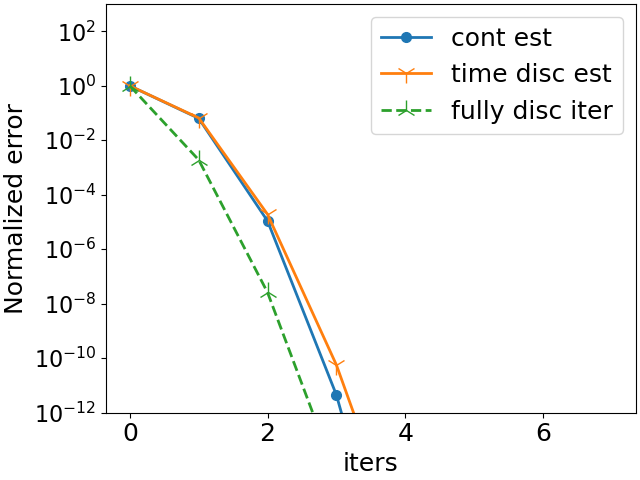}}
	\mbox{\includegraphics[width=0.33\textwidth]{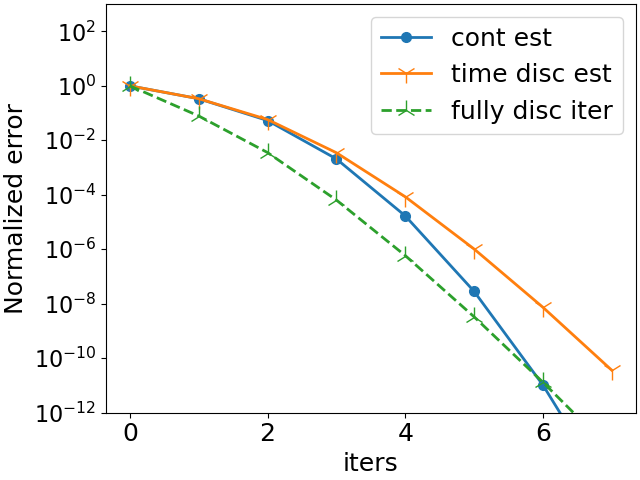}
	\includegraphics[width=0.33\textwidth]{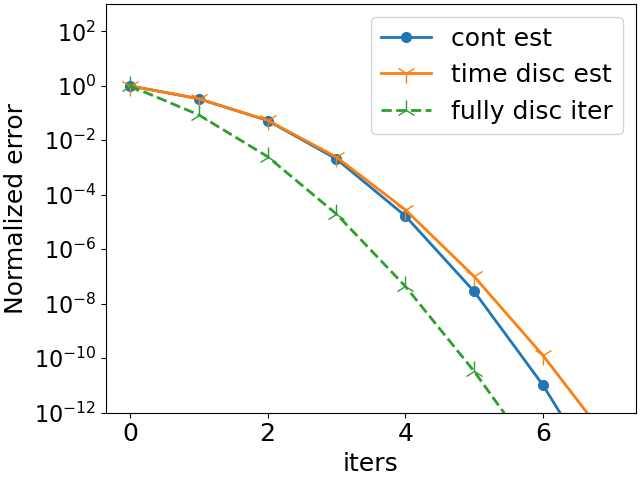}
	\includegraphics[width=0.33\textwidth]{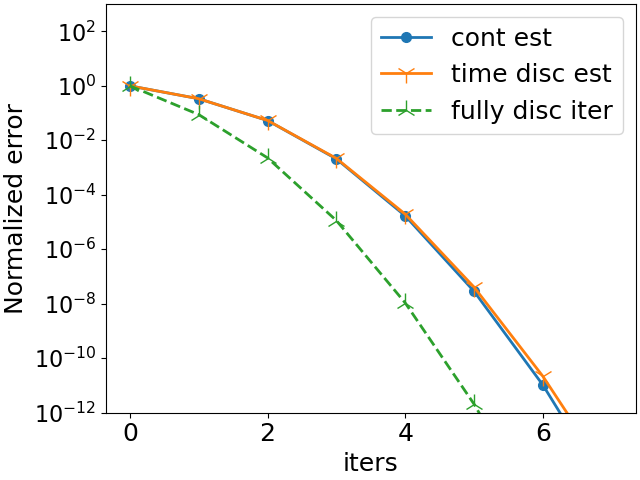}
	\hspace{12em}}
	\caption{Comparison of continuous error estimate (blue), time discrete error estimates (orange) and normalized error $\|\boldsymbol h^k|_{l^2}/\|\boldsymbol h^0|_{l^2}$ of DNWR iterations (green) for {\bf Case A}. Top: $T = 1/4$. Bottom $T=1$. 
	Left: $\Delta t= 1/16$. Middle: $\Delta t = 1/64$. Right: $\Delta t= 1/256$. 
	}
	\label{fg:simpleFullyDisc}
\end{figure}
In this test, we consider two times $T=1/4$ and $T=1$, and three time steps $\Delta t = 1/16, 1/64, 1/256$. We observe that the time discrete estimate accurately captures the convergence behavior of the DNWR iterations for all cases, and the method converges more slowly than predicted by the continuous estimate, when $\Delta t$ is large, and the numerical solutions are not accurate.
For small $\Delta t$, when the resulting numerical solutions become more accurate and close to the true solutions, convergence becomes as fast as predicted by the continuous estimate. As discussed in Section~\ref{sec:4}, we see that the time discrete error estimate approaches the continuous one when $\Delta t \to 0$. Comparing the top and bottom row in Figure~\ref{fg:simpleFullyDisc}, we also see that the continuous estimate becomes more accurate when $\Delta t$ is kept fixed and $T$ increases, which confirms our analysis in Section~\ref{sec:4}. 

We next use the high contrast steel-air {\bf Case B} to illustrate that one has to assign the Dirichlet and Neumann conditions accordingly to get fast convergence. Figure \ref{fg:realisticFullyDisc} shows the result for $T=5 \times 10^{4}$, at the top the air-steel case, and at the bottom the steel-air case. To illustrate the behaviour for small and large time steps, we use $\Delta t =10, 10^4$, based on the time discrete and the continuous estimate being close and far apart for these values, respectively, in Figure \ref{fg:realisticDifferentT}.
 

\begin{figure}[t]
	\centering
	\mbox{\includegraphics[width=0.33\textwidth]{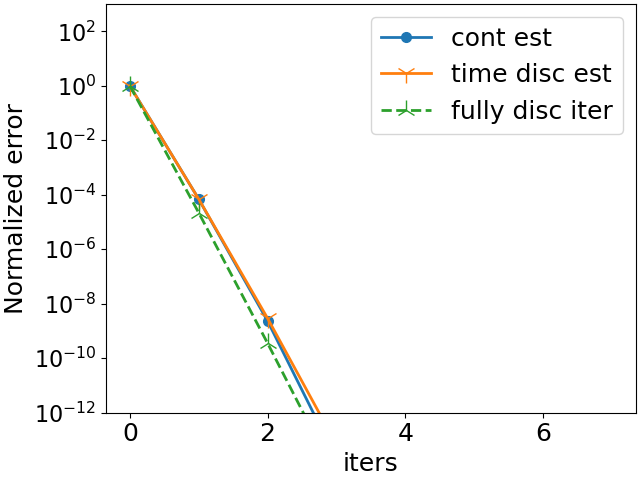}
	\includegraphics[width=0.33\textwidth]{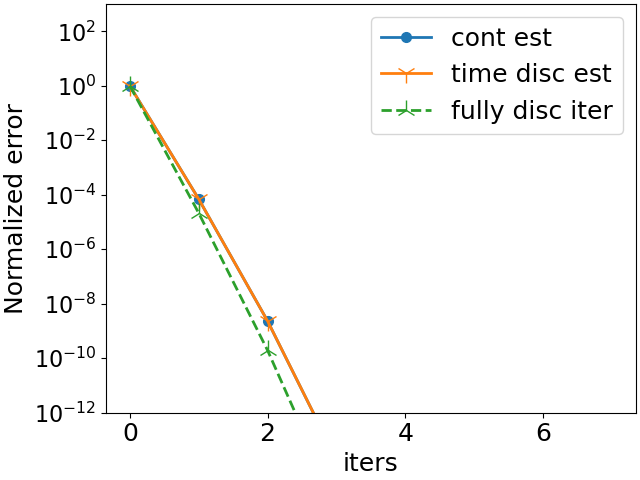}}
	\mbox{\includegraphics[width=0.33\textwidth]{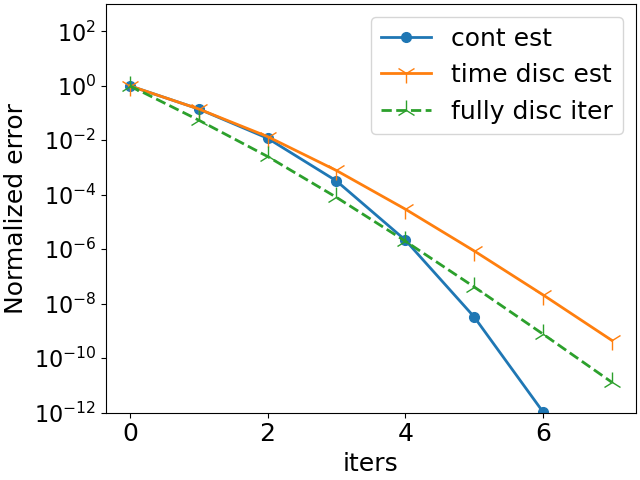}
	\includegraphics[width=0.33\textwidth]{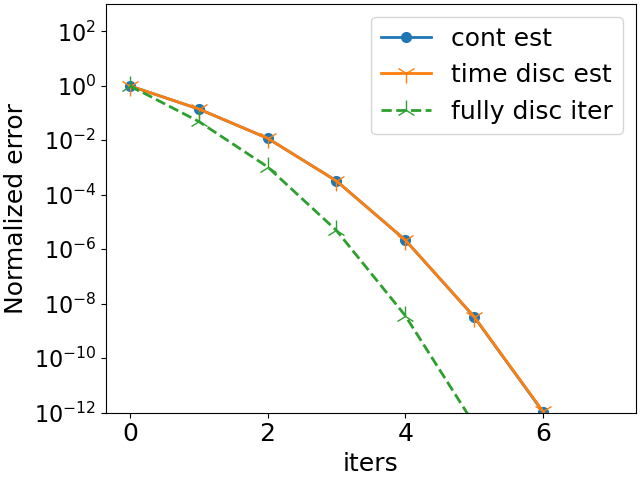}}
	\caption{Comparison of continuous error estimate (blue), time discrete error estimates (orange) and normalized error $\|\boldsymbol h^k\|_{l^2}/\|\boldsymbol h^0\|_{l^2}$ of~\eqref{eq:DNWR} iterations (green) for {\bf Case B} with $T=5 \times 10^4$. 
	Left: $\Delta t= 10^4$. Right: $\Delta t=10$. Top: air-steel. Bottom: steel-air. 
	}
	\label{fg:realisticFullyDisc}
\end{figure}

We see that convergence for the air-steel case is much faster than the steel-air case, and both the continuous and time discrete estimates predict this well. In the much slower steel-air case, the behavior is similar to Figure~\ref{fg:simpleFullyDisc}.

\subsection{Two dimensional experiment}
We now consider a higher dimensional air and steel example to show that our DNWR method can also be highly effective in higher dimensions, and the good assignment of the Dirichlet and Neumann conditions for fast convergence still holds.
We consider $\Omega_1 = (-1,0) \times (0,1)$, $\Omega_2 = (-1,0) \times (0,1)$, $Q_1 = \Omega_1 \times (0,T)$, and $Q_2 = \Omega_2 \times (0,T)$. 
Let $\Gamma = \{0\} \times (0,1) \times (0,T)$ denote the interface between the two domains. The error equations of the corresponding DNWR iterations are given by
\begin{equation}\label{eq:DNWR2D}
	\begin{aligned}
		\alpha_1 \partial_t u_1^{k+1} - \lambda_1 \left(\partial_{xx} u_1^{k+1} + \partial_{yy} u_1^{k+1} \right) &= 0 \ \text{ in } Q_1,\\
		u_1^{k+1}(0,y,t) &= h^{k}(y,t) \ \text{ in } (0, 1)\times(0, T),\\
		u_1^k(x,y,t) &= 0 \ \text{ on } \partial \Omega_1 \setminus \Gamma, \\
		u_1^{k+1}(x, y,  0) &= 0 \ \text{ in } \Omega_1,\\
		\alpha_2 \partial_t u_2^{k+1} - \lambda_2 \left(\partial_{xx} u_2^{k+1} + \partial_{yy} u_2^{k+1} \right) &= 0 \ \text{ in } Q_2, \\
		-\lambda_1 \partial_x u_1^{k+1}(0,y, t) &= -\lambda_2 \partial_x u_2^{k+1}(0,y, t) \ \text{ in } (0, 1)\times(0, T), \\
		u_2^{k+1}(x, y, t) &= 0 \ \text{ on } \partial \Omega_2 \setminus \Gamma,\\
		u_2^{k+1}(x, y, 0) &= 0 \ \text{ in } \Omega_2,\\
		h^{k+1}(y,t) = (1-\theta) h^k(y, t) + \theta u_2^{k+1}(0, &y, t) \ \text{ in } (0, 1)\times(0, T).
	\end{aligned}
\end{equation}

To solve~\eqref{eq:DNWR2D}, we discretize the problem using the implicit Euler method in time and a linear finite element method in space. The Python code is available in the GitHub repository \cite{Me25}. Similar to the 1D case, the initial waveform error $\boldsymbol h^0$ is chosen randomly. 

In Figure~\ref{fg:2DAirSteel}, we show at the top the air-steel case, and at the bottom the steel-air case for $T=5 \times 10^{4}$, the spatial mesh size $\Delta x = \Delta y = 10^{-3}$. To see an effect of the time discretization in this experiment, we set $\Delta t = 10^4, 5 \times 10^3$.
	
\begin{figure}[t]
	\centering
	\mbox{\includegraphics[width=0.33\textwidth]{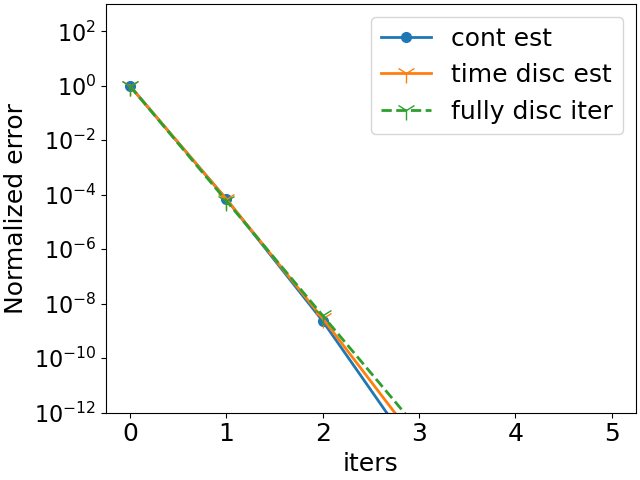}
	\includegraphics[width=0.33\textwidth]{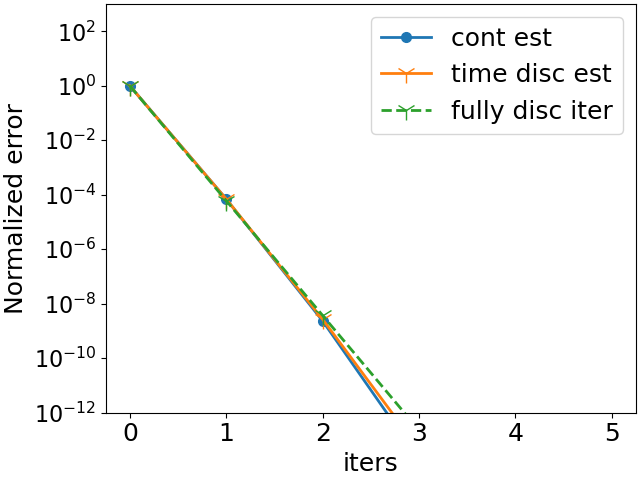}}
	\mbox{\includegraphics[width=0.33\textwidth]{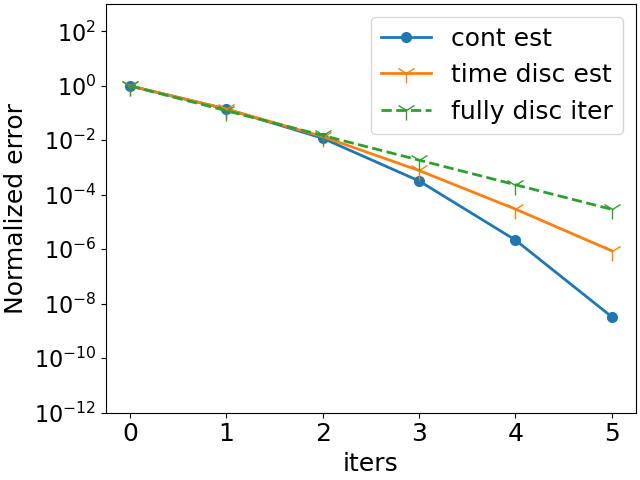}
	\includegraphics[width=0.33\textwidth]{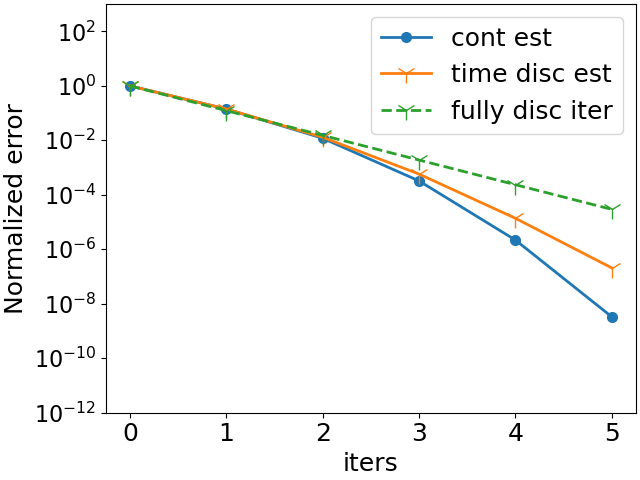}}
	\caption{Illustration of normalized error $\|\boldsymbol h^k\|_{l^2}/\|\boldsymbol h^0\|_{l^2}$ of 2D DNWR iterations using {\bf Case B} with $T=5 \times 10^4$ and $\Delta x = \Delta y = 10^{-3}$. 
	Left: $\Delta t= 10^4$. Right: $\Delta t= 5 \times 10^3$. Top: air-steel. Bottom: steel-air.
	}
	\label{fg:2DAirSteel}
\end{figure}
We observe once again that convergence for the air-steel case is much faster than the steel-air case, and our 1D estimates seem to predict this well, and seem to be even accurate for the first three iterations in all tested 2D cases.
For further iterations, a refined 2D analysis would be necessary for a more accurate prediction.

%% file: ourbibliography.bib
@article{arnoult2023,
	title = {Discrete-Time Analysis of Optimized {S}chwarz Waveform Relaxation with {R}obin Parameters Depending on the Targeted Iteration Count},
	author = {Arnoult, Arthur and Japhet, Caroline and Omnes, Pascal},
	year = {2023},
	journal = {ESAIM: M2AN},
	volume = {57},
	number = {4},
	pages = {2371--2396},
	issn = {2822-7840, 2804-7214},
	doi = {10.1051/m2an/2023051}}

@inproceedings{DD28,
	title = {Continuous Analysis of Waveform Relaxation for Heterogeneous Heat Equations},
	author = {Birken, Philipp and Gander, Martin J and Kotarsky, Niklas},
	editor = {P. Bjørstad and X.-C. Cai and V. Dolean and D. E. Keyes and R. Kornhuber and J. Xu},
	booktitle={Domain Decomposition Methods in Science and Engineering XXVIII},
	year={2025},
	volume = {155},
	pages={279--287}}

@book{Ciaramella2022,
title = {Iterative Methods and Preconditioners for Systems of Linear Equations},
author = {Gabriele Ciaramella and Martin J. Gander},
year = {2022},
doi = {10.1137/1.9781611976908},
publisher = {SIAM},
address = {Philadelphia}}

@article{clement2022,
	title = {Discrete Analysis of {S}chwarz Waveform Relaxation for a Diffusion Reaction Problem with Discontinuous Coefficients},
	author = {Clement, S. and Lemari{\'e}, F. and Blayo, E.},
	year = {2022},
	journal = {SMAI J. Comput. Math.},
	volume = {8},
	pages = {99--124},
	issn = {2426-8399},
	doi = {10.5802/smai-jcm.81}}

@article{engstrom2024,
	title = {Linearly Convergent Nonoverlapping Domain Decomposition Methods for Quasilinear Parabolic Equations},
	author = {Engstr{\"o}m, Emil and Hansen, Eskil},
	year = {2024},
	journal = {BIT Numer. Math.},
	volume = {64},
	number = {37},
	doi = {10.1007/s10543-024-01038-5}}

@article{gander2007,
	title = {Optimized {S}chwarz Waveform Relaxation Methods for Advection Reaction Diffusion Problems},
	author = {Gander, M. J. and Halpern, L.},
	year = {2007},
	journal = {SIAM J. Numer. Anal.},
	volume = {45},
	number = {2},
	pages = {666--697},
	issn = {0036-1429, 1095-7170},
	doi = {10.1137/050642137}}

@article{gander,
	author = {Gander, Martin J.},
	journal = {ETNA},
	pages = {228-255},
	publisher = {Kent State University, Department of Mathematics and Computer Science},
	title = {Schwarz methods over the course of time.},
	url = {http://eudml.org/doc/130616},
	volume = {31},
	year = {2008}}

@article{gakwma:16,
author = {Gander, M. J. and Kwok, F. and Mandal, B. C.},
issn = {10689613},
journal = {ETNA},
pages = {424--456},
title = {{Dirichlet-{N}eumann and {N}eumann-{N}eumann waveform relaxation algorithms for parabolic problems}},
volume = {45},
year = {2016}}

@book{waveformBook,
	title = {Time Parallel Time Integration},
	author = {Gander, Martin J. and Lunet, Thibaut},
	year = {2024},
	eprint = {https://epubs.siam.org/doi/pdf/10.1137/1.9781611978025},
	publisher = {SIAM},
	address = {Philadelphia},
	doi = {10.1137/1.9781611978025}}

@book{grafakos2014a,
	title = {Classical Fourier Analysis},
	author = {Grafakos, Loukas},
	year = {2014},
	series = {Graduate Texts in Mathematics},
	volume = {249},
	publisher = {Springer New York},
	doi = {10.1007/978-1-4939-1194-3},
	isbn = {978-1-4939-1193-6 978-1-4939-1194-3}}

@inproceedings{haynes2020,
	title = {Fully Discrete {S}chwarz Waveform Relaxation on Two Bounded Overlapping Subdomains},
	booktitle = {Domain {{Decomposition Methods}} in {{Science}} and {{Engineering XXV}}},
	author = {Haynes, Ronald D. and Mohammad, Khaled},
	editor = {Haynes, Ronald and MacLachlan, Scott and Xiao-Chuan Cai and Laurence Halpern and Hyea Hyun Kim and Axel Klawonn and Olof Widlund},
	year = {2020},
	volume = {138},
	pages = {159--166},
	publisher = {Springer International Publishing},
	address = {Cham},
	doi = {10.1007/978-3-030-56750-7_17}}

@article{hencha:09,
author = {Henshaw, W. D. and Chand, K. K.},
doi = {10.1016/j.jcp.2009.02.007},
issn = {00219991},
journal = {J. Comp. Phys.},
number = {10},
pages = {3708--3741},
publisher = {Elsevier Inc.},
title = {A composite grid solver for conjugate heat transfer in fluid–structure systems},
volume = {228},
year = {2009}}

@article{keyes2013,
	title = {Multiphysics Simulations: {{Challenges}} and Opportunities},
	shorttitle = {Multiphysics Simulations},
	author = {Keyes, David E and others},
	year = {2013},
	journal = {Int. J. High Perform. Comput. Appl.},
	volume = {27},
	number = {1},
	pages = {4--83},
	issn = {1094-3420, 1741-2846},
	doi = {10.1177/1094342012468181}}

@misc{code,
	type = {Github repository},
	author = {Kotarsky, Niklas},
	title =  {},
	year = {2026},
	howpublished = {https://github.com/NiklasKotarsky}}

@inproceedings{kowollik2013,
	title = {Fluid-Structure Interaction Analysis Applied to Thermal Barrier Coated Cooled Rocket Thrust Chambers with Subsequent Local Investigation of Delamination Phenomena},
	booktitle = {Progress in Propulsion Physics},
	author = {Kowollik, D. S. C. and Horst, P. and Haupt, M. C.},
	year = 2013,
	pages = {617--636},
	publisher = {EDP Sciences},
	address = {St. Petersburg, Russian},
	doi = {10.1051/eucass/201304617}}

@article{lelarasmee1982,
	title = {The Waveform Relaxation Method for Time-Domain Analysis of Large Scale Integrated Circuits},
	author = {Lelarasmee, E. and Ruehli, A.E. and {Sangiovanni-Vincentelli}, A.L.},
	year = {1982},
	journal = {IEEE Trans. Comput.-Aided Des. Integr. Circuits Syst.},
	volume = {1},
	number = {3},
	pages = {131--145},
	issn = {0278-0070},
	doi = {10.1109/TCAD.1982.1270004}}

@article{ledebl:13,
   author = {F. Lemarié and L. Debreu and E. Blayo},
   journal = {ETNA},
   pages = {148-169},
   title = {Toward an optimized global-in-time {S}chwarz algorithm for diffusion equations with discontinuous and spatially variable coefficients. {P}art 1: the constant coefficients case},
   volume = {40},
   year = {2013}}

@book{Lions72,
	title = {Non-Homogeneous Boundary Value Problems and Applications II},
	author = {Lions, J. L. and Magenes, E.},
	year = 1972,
	volume = {182},
	publisher = {Springer Berlin},
	isbn = {3-540-05444-8}}

@article{lorentzon2020,
	title = {A Numerical Study of Partitioned Fluid-structure Interaction Applied to a Cantilever in Incompressible Turbulent Flow},
	author = {Lorentzon, Johan and Revstedt, Johan},
	year = {2020},
	journal = {Int. J. Numer. Methods Eng.},
	volume = {121},
	number = {5},
	pages = {806--827},
	issn = {0029-5981, 1097-0207},
	doi = {10.1002/nme.6245}}

@InProceedings{ma:13,
	author="Mandal, Bankim C.",
	editor="Erhel, Jocelyne and Martin J. Gander and Laurence Halpern and Géraldine Pichot and Taoufik Sassi and Olof Widlund",
	title="A Time-Dependent {D}irichlet-{N}eumann Method for the Heat Equation",
	booktitle="Domain Decomposition Methods in Science and Engineering XXI",
	year="2014",
	publisher="Springer International Publishing",
	address="Cham",
	pages="467--475",
	volume = "98",
	isbn="978-3-319-05789-7"}

@article{meimonbir:23,
	author = {P. Meisrimel and A. Monge and P. Birken},
	issue = {11},
	journal = {ZAMM},
	title = {A time adaptive multirate {D}irichlet–{N}eumann waveform relaxation method for heterogeneous coupled heat equations},
	volume = {103},
	url = {https://onlinelibrary.wiley.com/doi/pdf/10.1002/zamm.202100328},
	year = {2023}}

@misc{Me25,
	type = {Github Repository},
	author = {Meisrimel, Peter},
	title =  {},
	year = {2026},
	howpublished = {https://github.com/PeterMeisrimel/DNWR}}

@article{monbir:18,
author = {Monge, Azahar and Birken, Philipp},
doi = {10.1007/s00466-017-1511-3},
issn = {01787675},
journal = {Comput. Mech.},
number = {3},
pages = {525--541},
publisher = {Springer Berlin Heidelberg},
title = {On the convergence rate of the {D}irichlet–{N}eumann iteration for unsteady thermal fluid–structure interaction},
volume = {62},
year = {2018}}

@book{Oppenheim1999,
	title = {Discrete-Time Signal Processing},
	edition = {2nd},
	author = {Alan V. Oppenheim and Ronald W. Schafer},
	year = {1999},
	publisher = {Prentice-Hall, Inc.}}

@article{rodenberg2025,
	title={A waveform iteration implementation for black-box multi-rate higher-order coupling}, 
	author={Benjamin Rodenberg and Benjamin Uekermann},
	year={2025},
	journal = {Submitted},
	volume = {},
	pages = {},
	number = {}}

@article{Kwok2025,
	title = {Convergence analysis of optimized {S}chwarz waveform relaxation by exponential weighting},
	journal = {Submitted},
	year = {2026},
	author = {Alejandro Alfonso Rodriguez and Felix Kwok and Martin J. Gander},
	volume = {},
	number = {}}

@article{schuller2025,
	title = {Quantifying Coupling Errors in Atmosphere-Ocean-Sea Ice Models: {A} Study of Iterative and Non-Iterative Approaches in the {EC-Earth AOSCM}},
	author = {Sch{\"u}ller, Valentina and Lemari{\'e}, Florian and Birken, Philipp and Blayo, Eric},
	year = {2025},
	journal = {Geosci. Model Dev.},
	volume = {18},
	number = {22},
	pages = {9167--9187},
	issn = {1991-9603},
	doi = {10.5194/gmd-18-9167-2025}}
